\documentclass[a4paper,11pt]{elsarticle}
\usepackage{fullpage,microtype}
\usepackage{amsmath,amsthm,amsfonts}
\usepackage{breqn}
\usepackage{thmtools}
\usepackage{thm-restate}
\usepackage{hyperref}
\usepackage{cleveref}
\usepackage{stmaryrd}

\newtheorem{proposition}{Proposition}
\newtheorem{corollary}{Corollary}


\let\ab\allowbreak

\begin{document}

\begin{frontmatter}
\title{Modulus of continuity eigenvalue bounds for homogeneous graphs and convex subgraphs with applications to quantum Hamiltonians}

\author[1,3]{Michael Jarret\corref{cor1}}
\ead{mjarret@pitp.ca}

\author[2,3]{Stephen P. Jordan}
\ead{stephen.jordan@nist.gov}

\address[1]{Department of Physics, University of Maryland, College Park, MD 20742}
\address[2]{Applied and Computational Mathematics Division, National Institute of Standards and Technology, Gaithersburg, MD 20899}
\address[3]{Joint Center for Quantum Information and Computer Science (QuICS), University of Maryland, College Park, MD 20742}

\cortext[cor1]{Corresponding author}

\begin{abstract}
 We adapt modulus of continuity estimates to the study of spectra of combinatorial graph Laplacians, as well as the Dirichlet spectra of certain weighted Laplacians. The latter case is equivalent to stoquastic Hamiltonians and is of current interest in both condensed matter physics and quantum computing. In particular, we introduce a new technique which bounds the spectral gap of such Laplacians (Hamiltonians) by studying the limiting behavior of the oscillations of their solutions when introduced into the heat equation. Our approach is based on recent advances in the PDE literature, which include a proof of the fundamental gap theorem by Andrews and Clutterbuck.
\end{abstract}
\end{frontmatter}

\section{Introduction} 
In this paper, we investigate the spectral structure of combinatorial graph Laplacians by adapting recent advances in the spectral theory of Schr\"odinger operators on $\mathbb{R}^n$. A combinatorial Laplacian $L$ corresponding to a connected graph $G$ of $N$ vertices has eigenvalues $0<\lambda_1(L)\leq \lambda_2(L) \leq \dots \leq \lambda_{N-1}(L)$ and corresponding eigenvectors $u_0,u_1,u_2,\dots,u_{N-1}$. In part of what follows, we focus on the spectral gap of $L$, or the difference in its two lowest eigenvalues. In this case, because $L$ always has lowest eigenvalue $0$, the spectral gap is simply $\lambda_1(L)$. 

To proceed, we introduce a technique based largely on the work of Ben Andrews, Julie Clutterbuck, and collaborators \cite{Andrews2011, andrews2012gradient, andrews2014moduli, andrews2013time, andrews2009lipschitz, andrews2013sharp}. Additionally, we attempt an approach similar to \cite{Andrews2011} to bounding the spectral gap $\gamma(H)$ of the physically-motivated case of a Hermitian matrix $H = L + W$, where $W$ is some diagonal matrix. Recently, these matrices have been called ``stoquastic Hamiltonians" in the physics literature \cite{Bravyi2006a}.\footnote{The term ``stoquastic" comes from the resemblance to stochastic matrices. Up to normalization, stoquastic matrices are equivalent to sub-stochastic matrices (\emph{cf.} \cite{Feller1956,Feller1957}). The spectral properties of sub-stochastic matrices have been previously studied in \cite{Lawler1988}. In graph theory, the current setting, these Hamiltonians correspond to Laplacians of subgraphs of weighted graphs with Dirichlet boundary, as discussed in \Cref{sec:Dirichlet} and elaborated on in \cite{Chung}.} That the lowest eigenvalue of $H$ is no longer $0$ and the corresponding eigenvector is nonuniform makes determining the spectral gap of $H$ a more challenging problem than that of $L$ alone. In this paper, we reduce such a bound to an estimate involving the log-concavity of the lowest eigenvector $u_0$ of $H$. 

Because this is the first attempt at applying these techniques to graph spectra, we simplify our problem by considering only \textit{homogeneous} graphs and their strongly convex subgraphs. A homogeneous graph $G$ has an associated group $\mathcal{H}$ and the edges associated with any vertex of $G$ may be identified with the elements of a particular generating set $\mathcal{K}$ for $\mathcal{H}$. (For a formal definition, see \Cref{ssec:IHG}.) We consider only the case such that $g^{-1} \in \mathcal{K}$ if and only if $g \in \mathcal{K}$ and therefore the graph is undirected. Also, we assume that the graph is \textit{invariant}, or that the generating set $\mathcal{K}$ is invariant under conjugation by elements $g \in \mathcal{K}$. A subgraph $S \subseteq G$ with vertex set $V(S)$ is \textit{strongly convex} if for each pair of vertices $x,y \in V(S)$, all of the shortest paths in $G$ from $x$ to $y$ are also contained in $S$ \cite{Chung}. 

Our approach follows \cite{Andrews2011}, where the authors proved the Fundamental Gap Conjecture. In particular, we study the behavior of oscillations in functions defined on the graph $V(S)$. In \cite{Andrews2011}, the authors studied the time-extended behavior of these oscillation terms when introduced into the heat equation, since such terms cannot decay any slower than $C e^{-\lambda_1(L)t}$ for some constant $C$. These oscillation terms are characterized by a modulus of continuity, a construct which typically tracks how uniformly continuous a function is, but we can think of as quantifying the size of oscillations separated by a particular distance. More specifically, for a function $f:V(S)\longrightarrow \mathbb{R}$ we say that it has modulus of continuity $\eta$ if
\begin{equation*}
	\lvert f(y)-f(x) \rvert  \leq \eta(d(y,x)) \; \text{for all $y,x \in V(S)$} 
\end{equation*}
where $d(y,x)$ is the shortest path length between vertices $y,x \in V(S)$. We will further formalize this modulus in \Cref{sec:homogeneous1}.

By sacrificing some tightness, one can apply modulus of continuity estimates without utilizing the heat equation at all. Instead one can derive bounds in terms of the $\ell^2$-norm of the modulus. Nonetheless, our intuition stems from the heat equation and we expect that the heat equation will prove useful in subsequent work, so we derive our results from this perspective.

In \Cref{sec:homogeneous1}, we prove the primary result of this paper: 

\begin{restatable}{thm}{graphbound}\label{thm:bound}
 Let $L$ be the combinatorial Laplacian for a strongly convex subgraph $S\subseteq G$ of an invariant homogeneous graph $G$. Then,
 \begin{equation*}
  \lambda_1(L) \geq 2 \left(1 - \cos\left(\frac{\pi}{D+1} \right)\right)
 \end{equation*}
 where $D$ is the diameter of $S$.
\end{restatable}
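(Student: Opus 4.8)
The plan is to exhibit $\lambda_1(L)$ as the asymptotic exponential decay rate of solutions of the combinatorial heat equation $\partial_t\phi=-L\phi$ on $V(S)$, and to bound that rate from below by comparison with the path graph $P_{D+1}$ on $D+1$ vertices. Writing $\phi(\cdot,t)=\sum_{k\ge0}c_k e^{-\lambda_k t}u_k$, any initial data orthogonal to $u_0$ decays no faster than $e^{-\lambda_1(L)t}$, and taking $\phi(\cdot,0)$ a rescaling of $u_1$ makes $\phi(\cdot,t)=e^{-\lambda_1(L)t}\phi(\cdot,0)$ exactly. The natural comparison object is $P_{D+1}$: it has diameter $D$, its combinatorial Laplacian has spectral gap exactly $\mu:=2\big(1-\cos\tfrac{\pi}{D+1}\big)$, and its Fiedler vector $v_j=\cos(\tfrac{\pi(j+1/2)}{D+1})$ ($j=0,\dots,D$) is strictly monotone. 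From $v$ I would build a comparison modulus $\eta\colon\{0,1,\dots,D\}\to\mathbb{R}_{\ge0}$ — nondecreasing with $\eta(0)=0$, informally the maximal oscillation of $v$ over a pair of path vertices at distance $\ell$, though its precise form needs care (see below) — and then prove the preservation statement: if $\phi(\cdot,0)$ has modulus of continuity $\eta$ on $S$, then $\phi(\cdot,t)$ has modulus of continuity $e^{-\mu t}\eta$ for all $t\ge0$. Granting this, rescale $u_1$ so that $\eta$ is attained at some distance $\ell$; then $e^{-\lambda_1(L)t}\eta(\ell)\le e^{-\mu t}\eta(\ell)$ for all $t$, which forces $\lambda_1(L)\ge\mu$, i.e.\ the theorem.

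The core of the argument is the preservation statement, which I would establish by a parabolic maximum principle applied to the two-point function $Z(x,y,t)=\phi(y,t)-\phi(x,t)-e^{-\mu t}\eta(d(x,y))$ on $V(S)\times V(S)\times[0,\infty)$. Suppose $Z\le0$ at $t=0$ but $Z$ first reaches $0$ from below at some time $t_0$ and some pair $(x_0,y_0)$ with $m:=d(x_0,y_0)\ge1$, so that $\partial_tZ(x_0,y_0,t_0)\ge0$. Expanding this derivative through $\partial_t\phi=-L\phi$, and using $Z(\cdot,\cdot,t_0)\le0$ to bound $\phi$ at the neighbours of $x_0$ and of $y_0$, the whole inequality collapses to a combinatorial one relating $\eta(m-1),\eta(m),\eta(m+1)$, the degrees of $x_0$ and $y_0$ in $S$, and the numbers of neighbours of $x_0$ (resp.\ $y_0$) at distance $m-1,m,m+1$ from $y_0$ (resp.\ $x_0$). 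Here the two hypotheses enter. Invariance makes the word metric bi-invariant — conjugation-invariance of $\mathcal K$ makes word length invariant under conjugation by generators, hence by all of $\mathcal H$ — which gives a distance-preserving bijection between the generator-labelled edges at $x_0$ and those at $y_0$, and thereby makes the inward/outward neighbour counts at the two endpoints agree; this symmetry is what cancels the leading-order terms and leaves $\mu\,\eta(m)$ against a second difference of $\eta$. Strong convexity supplies the rest: it makes $d$ restricted to $V(S)$ agree with the ambient graph distance; it forces every neighbour of $x_0$ or $y_0$ that is one step closer to the other endpoint to lie in $V(S)$, so the inward counts survive passage to the subgraph; and one checks that deleting ``level'' or ``outward'' neighbours at the boundary of $S$ only weakens the inequality. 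Concavity of $\eta$ then closes the estimate.

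The step I expect to be the genuine obstacle is making this combinatorial inequality hold uniformly in $m$ and over all admissible neighbour configurations with the \emph{sharp} constant $\mu=2(1-\cos\tfrac{\pi}{D+1})$ — equivalently, proving that $P_{D+1}$ is the extremal case among all diameter-$D$ strongly convex subgraphs of invariant homogeneous graphs. Two features make this delicate. First is parity: the maximal oscillation of $v$ at distance $\ell$ genuinely depends on whether $D-\ell$ is even or odd, so the smooth ansatz $\eta(\ell)=2\sin(\tfrac{\pi\ell}{2(D+1)})$ only satisfies the one-step recursion with the \emph{weaker} constant $2(1-\cos\tfrac{\pi}{2(D+1)})$; recovering the stated bound requires carrying the exact path data, or equivalently coupling $\phi$ on $S$ to the path's own heat flow by pairing a distance-$m$ pair in $S$ with the centred pair in $P_{D+1}$, which is what lets the recursion ``see'' $\tfrac{\pi}{D+1}$ rather than $\tfrac{\pi}{2(D+1)}$. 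Second is the discrepancy between $D$ and the ambient diameter: $G$ may have diameter far larger than $D$, so a neighbour of $x_0$ in $G$ can lie at distance $m+1$ from $y_0$ even when $m=D$, and strong convexity must be invoked precisely to exclude such a neighbour from $S$ so that, within $S$, the neighbour counts obey the constraints that $P_{D+1}$ saturates. Once the inequality is in hand at $\mu=2(1-\cos\tfrac{\pi}{D+1})$, and hence the preservation statement, letting $t\to\infty$ gives $\lambda_1(L)\ge2\big(1-\cos\tfrac{\pi}{D+1}\big)$.
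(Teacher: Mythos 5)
Your general program — modulus of continuity plus heat-equation comparison against a path Laplacian — is the same program the paper executes, but you stop short exactly at the step you flag as ``the genuine obstacle.'' Since that step is the entire content of the proof, the proposal as written has a gap, and the gap is precisely the one the paper's key device is designed to close.

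The paper does not run a two-point maximum principle against a fixed comparison modulus (the Fiedler vector of $P_{D+1}$). Instead it takes $\eta(s,t)$ to \emph{be} the modulus of continuity of the evolving solution, extends it as an odd function to $\llbracket -D,D\rrbracket$, and restricts $s$ to one parity class. It then proves $\frac{d\eta}{dt}\le -L_P\eta$ where $L_P$ is the combinatorial Laplacian of the step-$2$ path on the parity sublattice of $\llbracket -D,D\rrbracket$ — a path of exactly $D+1$ vertices, whose spectral gap is $\mu=2\bigl(1-\cos\tfrac{\pi}{D+1}\bigr)$. The step-$2$ structure arises because one fixes generators $a_0,a_1$ with $a_0y$ and $a_1x$ on a single shortest path from $y$ to $x$, pairs the Laplacian terms as $\bigl(u(a_0y)-u(a_1x)\bigr)+\bigl(u(a_1y)-u(a_0x)\bigr)-2\bigl(u(y)-u(x)\bigr)$, and thereby only ever sees $\eta(s-2)$, $\eta(s)$, $\eta(s+2)$. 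All remaining edges either cancel (same generator at $y$ and $x$, handled via invariance and Proposition~\ref{prop:sp1}) or can be discarded with the right sign (boundary cases, handled by strong convexity via Propositions~\ref{prop:BCs}--\ref{prop:reduce}). Your sketch instead produces a relation in $\eta(m-1),\eta(m),\eta(m+1)$, which is the step-$1$ picture; that picture, combined with a Dirichlet-at-$0$ modulus on $\{0,\dots,D\}$, is what yields the weaker constant $\sim 2\bigl(1-\cos\tfrac{\pi}{2D+1}\bigr)$ you worry about. Your ``couple to the centred pair of $P_{D+1}$'' heuristic is indeed the right instinct — the odd Fiedler mode of a $(D+1)$-vertex path attains its maximal oscillation at the centred pair, which is the Andrews--Clutterbuck mechanism — but it remains a heuristic; the paper's anti-symmetric parity sublattice is a concrete realization of it.

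Two smaller points. First, after establishing $\frac{d\eta}{dt}\le -L_P\eta$ the paper avoids the comparison-modulus bookkeeping entirely by pairing with $\eta$ itself: $\eta^\top\frac{d\eta}{dt}\le -\eta^\top L_P\eta\le -\mu\lvert\eta\rvert^2$, which forces $\lvert\eta\rvert\le Ce^{-\mu t}$ and hence $\lambda_1\ge\mu$ (an alternative, non-heat-flow version is also given). This sidesteps the issue in your setup of choosing the rescaling of $u_1$ so that its modulus sits under the Fiedler modulus with touching contact, and the usual strict-maximum-principle or $\epsilon$-perturbation care one needs because $e^{-\mu t}\eta$ satisfies the comparison equation with equality. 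Second, your claim that invariance gives a ``distance-preserving bijection between generator-labelled edges at $x_0$ and at $y_0$'' is stronger than what is used; what the paper actually needs from invariance is Proposition~\ref{prop:sp1}, $d(ay,ax)=d(y,x)$ for $a\in\mathcal{K}$, which controls the ``same-generator'' terms, together with the convexity propositions that control what happens when a generator carries one endpoint outside $S$.
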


This theorem gives a nice lower bound to the spectral gap of combinatorial Laplacians in terms of the diameter of the corresponding graph. Although there is a long history of results comparing eigenvalues to diameters, this particular bound relates $\lambda_1(L)$ to the first eigenvalue of the path graph of $D+1$ vertices. This bound is also tight, since it is always achieved for $S \subset G$ such that $S$ is the path graph with $D$ edges. As a corollary to \Cref{thm:bound}, this bounds the eigenvalues of the normalized laplacian $\mathcal{L}$ of $S$. Thus, this provides a tight bound comparable to that of \cite{Chung1994}, where the author derives a lower bound of $1/(8k D^2)$ for the Neumann eigenvalues of $S$ where $k$ is the degree of $S$.

 In \Cref{sec:hypercube}, the proof strategy of \Cref{thm:bound} is adapted to the case of the hypercube graph. In particular, we recover the following, well-known bound:
 
\begin{restatable}{thm}{hypercube}\label{thm:hypercube}
	Let $L$ be the combinatorial Laplacian for a hypercube graph. Then, $\lambda_1(L) \geq 2$.
\end{restatable}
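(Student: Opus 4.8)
The plan is to view the $n$-cube $Q_n$ as the $n$-fold Cartesian product of the path $P_2$ on two vertices and to re-run the modulus-of-continuity/heat-flow argument behind \Cref{thm:bound}, but one coordinate at a time. The point is that a ``geodesic in the $i$th direction'' of $Q_n$ is a single edge, i.e.\ a copy of $P_2$, so the one-dimensional comparison model is the path of diameter $D=1$ rather than the diameter-$n$ path one would get from a naive application of \Cref{thm:bound}. Since $2\bigl(1-\cos(\tfrac{\pi}{D+1})\bigr)$ equals $2\bigl(1-\cos(\tfrac{\pi}{2})\bigr)=2$ at $D=1$, this is exactly the claimed bound, and it is sharp because the eigenvalues of $L$ are $2k$ for $0\le k\le n$.

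Concretely, let $\lambda_1=\lambda_1(L)$, let $u$ be a corresponding eigenvector, and let $v(\cdot,t)=e^{-tL}u=e^{-\lambda_1 t}u$ solve the heat equation $\partial_t v=-Lv$ with initial data $u$. Write $L=\sum_{i=1}^n L_i$, where $L_i$ is the Laplacian acting only in the $i$th coordinate; the $L_i$ commute, so $e^{-tL}=\prod_{i=1}^n e^{-tL_i}$. For the discrete directional difference $D_i f(x):=f(x\oplus e_i)-f(x)$ (the natural analogue of an oscillation along a geodesic in direction $i$), two elementary facts hold: first, $e^{-tL_i}$ acts on the $i$th coordinate as the two-point heat flow and satisfies $D_i\bigl(e^{-tL_i}f\bigr)=e^{-2t}\,D_i f$ exactly, since the nonzero eigenvalue of the $P_2$-Laplacian is $2$; second, for $j\ne i$ the operator $e^{-tL_j}$ commutes with $D_i$ and is an $\ell^\infty$-contraction, as it is an averaging operator. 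Combining these gives
\begin{equation*}
\lVert D_i v(\cdot,t)\rVert_\infty \;\le\; e^{-2t}\,\lVert D_i u\rVert_\infty \qquad\text{for all } t\ge 0 .
\end{equation*}

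On the other hand $v(\cdot,t)=e^{-\lambda_1 t}u$ forces $\lVert D_i v(\cdot,t)\rVert_\infty=e^{-\lambda_1 t}\lVert D_i u\rVert_\infty$. Since $u$ is orthogonal to the constants and nonzero, $D_i u\not\equiv 0$ for some $i$; for that $i$ we cancel $\lVert D_i u\rVert_\infty>0$ and obtain $e^{-\lambda_1 t}\le e^{-2t}$ for all $t\ge 0$, hence $\lambda_1\ge 2$. In the language of \Cref{sec:homogeneous1}, $\lVert D_i u\rVert_\infty$ is precisely the modulus of continuity of $u$ measured along the diameter-one path $P_2$ sitting in the $i$th coordinate, and the displayed inequality says this modulus contracts under the $Q_n$ heat flow at least as fast as it would under the $P_2$ heat flow.

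The main obstacle is the second of the two facts above: one must verify that the ``transverse'' heat flows $e^{-tL_j}$, $j\ne i$, cannot slow the decay of the directional oscillation $D_i$. This is exactly the discrete maximum principle, packaged through the commuting product structure of the semigroup; once the one-dimensional comparison for $P_2$ is in place, it is this product argument that lets the modulus-of-continuity machinery ``see'' diameter $1$ in each coordinate instead of diameter $n$. As remarked in the introduction, one could instead run the lossier $\ell^2$ form of the estimate without invoking the heat equation, but the heat-flow formulation is the natural one here.
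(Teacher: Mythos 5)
Your proof is correct, but it takes a genuinely different route from the paper's. You exploit the Cartesian product structure $Q_n = P_2^{\,\times n}$: split $L = \sum_i L_i$, factor the semigroup $e^{-tL} = \prod_i e^{-tL_i}$, use the exact decay $D_i\bigl(e^{-tL_i}f\bigr) = e^{-2t}D_if$ from the $P_2$ spectrum, and control the transverse factors $e^{-tL_j}$ ($j\neq i$) via the $\ell^\infty$ contraction of a stochastic semigroup. This is essentially a clean, semigroup-flavored proof of the standard fact that $\lambda_1$ of a Cartesian product is the minimum of the factors' $\lambda_1$'s, and it buys you generality (it works verbatim for any Cartesian power of a fixed graph) and conceptual transparency. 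The paper instead stays inside the modulus-of-continuity machinery it has just built for arbitrary strongly convex subgraphs: it picks $y,x$ achieving $\eta(2)$ (or $\eta(1)$), uses the abelian, self-inverse structure of $\mathcal{K}$ to identify $by = b'x$ and $b'y = bx$, and applies Proposition 4 (the reduction lemma) with $\mathcal{Y}=\mathcal{X}=\{b,b'\}$ to get $\tfrac{d}{dt}\eta(2) \le -2\eta(2)$ directly, with a short case split on whether the supremum is achieved at distance $1$ or $2$. What the paper's argument buys is the observation -- emphasized after the proof -- that the gap estimate here is a \emph{local} property: only the modulus at distance $\le 2$ matters, a structural point that your factorization also makes visible but in a way that is special to product graphs rather than flowing from the general Propositions 3--5. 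Both arguments are correct and both ultimately lean on the same abelian structure; yours is shorter and more self-contained, the paper's is a better advertisement for the general modulus-of-continuity framework.
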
 

 Since one can directly calculate that $\lambda_1(L)=2$ independently of $D$, this result is tight and demonstrates the power of modulus of continuity estimates adapted to spectral graph theory. In physical contexts, this estimate may also prove useful. We begin to explore such physical cases in \Cref{sec:Dirichlet}, where we consider matrices of the form $H=L+W$ where $W$ is any diagonal matrix and $L$ is a combinatorial Laplacian. For simplicity, we restrict $W$ to be positive-semidefinite, but since the spectral gap of $H$ is unaltered by an addition of a constant multiple of the identity matrix, our results apply equally well to all diagonal $W$. In particular, we derive the following bound on the spectral gap $\gamma(H) = \lambda_1(H)-\lambda_0(H)$:

\begin{restatable}{thm}{Dirichlet}\label{thm:Dirichlet}
	Let $(u_0, \lambda)$ and $(u_1, \lambda+\gamma)$ be the two lowest eigenvector-eigenvalue pairs of $H=L+W$ where $L$ is a combinatorial Laplacian of a strongly convex subgraph of an invariant homogeneous graph and $W$ is a diagonal positive-semidefinite matrix. Let the componentwise ratio $f=u_1/u_0$ have modulus of continuity $\eta$ and $g = \log(u_0)$. Then, 
	\begin{equation*}
		\gamma \geq 2 C_{u_0} \left(1-\cos\left(\frac{\pi}{D+1} \right)\right)
	\end{equation*}
	where $D$ is the diameter of $S$,
	\begin{equation*}
	C_{u_0} = \inf_{(y,x)\in \xi}\frac{\displaystyle \sum_{a\in\mathcal{K}}\Delta_a f(y)e^{g(ay)-g(y)} -\sum_{a\in\mathcal{K}}\Delta_a f(x)e^{g(ax)-g(x)}}{\displaystyle\sum_{a\in\mathcal{K}}\Delta_a f(y)-\sum_{a\in\mathcal{K}}\Delta_a f(x)},
	\end{equation*}	
	and 
	\begin{equation*}	
	\xi = \left\{\left(y,x\right) \in V(S) \; \vert \; \eta\left(\lvert y^{-1}x\rvert\right)= f(y)-f(x)\right\}.
	\end{equation*}
\end{restatable}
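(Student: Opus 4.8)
The plan is to reduce the spectral gap of $H$ to the spectral gap of a drift-weighted Laplacian — the discrete counterpart of the drift-diffusion operator used by Andrews and Clutterbuck — and then re-run the modulus-of-continuity argument of \Cref{sec:homogeneous1} on that operator. Since $-H$ has nonnegative off-diagonal entries and $S$ is connected, the ground state $u_0$ is strictly positive, so $f = u_1/u_0$ is well-defined. Substituting $u_1 = f u_0$ into $H u_1 = (\lambda+\gamma) u_1$, expanding with the discrete Leibniz identity $\Delta_a(f u_0)(x) = u_0(ax)\,\Delta_a f(x) + f(x)\,\Delta_a u_0(x)$, and eliminating $\lambda$ via $H u_0 = \lambda u_0$, both the diagonal potential $W$ and the eigenvalue $\lambda$ cancel, leaving
\[
 -\sum_{a\in\mathcal{K}} e^{g(ax)-g(x)}\,\Delta_a f(x) = \gamma\, f(x), \qquad g = \log u_0 .
\]
So $f$ is an eigenvector of the weighted Laplacian $\mathcal{L}_g h(x) := -\sum_{a\in\mathcal{K}} e^{g(ax)-g(x)}\,\Delta_a h(x)$, which is self-adjoint with respect to the measure $u_0^2$ (the edge weight $u_0(x)^2 e^{g(ax)-g(x)} = u_0(x)\,u_0(ax)$ is symmetric under $x \leftrightarrow ax$), annihilates the constant vector, and has $\gamma$ as its first nonzero eigenvalue — indeed $\mathcal{L}_g = u_0^{-1}(H-\lambda)u_0$, a conjugation. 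From $\langle u_1, u_0\rangle = 0$ we get $\sum_x f(x)\,u_0(x)^2 = 0$, so $f$ is non-constant; taking $\eta$ to be its sharp modulus of continuity, $\eta$ is a genuine nonzero nondecreasing function on $\{0,\dots,D\}$ and the extremal set $\xi$ contains a pair $(y,x)$ with $\eta(|y^{-1}x|) = f(y)-f(x) > 0$.

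Next I would repeat the heat-flow estimate of \Cref{sec:homogeneous1} for the semigroup $e^{-t\mathcal{L}_g}$ applied to the stationary profile $v_t = e^{-\gamma t} f$, whose extremal pairs at every time are exactly the pairs in $\xi$. The crucial point is that the \emph{geometric} heart of that estimate — the step invoking strong convexity of $S$ (all geodesics between two vertices stay in $S$) together with invariance and homogeneity (so the generators in $\mathcal{K}$ can be matched across a geodesic, yielding a comparison with the combinatorial Laplacian of the path graph on $D+1$ vertices) — controls only the \emph{unweighted} Laplacian $L$. Applied to $f$, it should yield at an extremal pair $(y,x) \in \xi$
\[
 (Lf)(y) - (Lf)(x) \;\geq\; 2\left(1 - \cos\left(\frac{\pi}{D+1}\right)\right)(f(y) - f(x)),
\]
with $2(1-\cos(\pi/(D+1)))$ the first path-graph eigenvalue, exactly as in \Cref{thm:bound}; in particular $(Lf)(y) - (Lf)(x) > 0$. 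The weights $e^{g(ax)-g(x)} = u_0(ax)/u_0(x)$, which carry the log-concavity data of $u_0$, are precisely the discrepancy between $\mathcal{L}_g f$ and $Lf$, and the per-pair ratio inside $C_{u_0}$ is exactly the factor by which this reweighting distorts the estimate above.

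Finally I would assemble the bound by algebra alone. Using $\sum_{a\in\mathcal{K}} e^{g(az)-g(z)}\Delta_a f(z) = -\gamma f(z)$ and $\sum_{a\in\mathcal{K}}\Delta_a f(z) = -(Lf)(z)$, the ratio defining $C_{u_0}$ at a pair $(y,x)\in\xi$ with $(Lf)(y)\neq(Lf)(x)$ equals $\gamma\,(f(y)-f(x)) / ((Lf)(y)-(Lf)(x))$, so
\[
 \gamma\,(f(y)-f(x)) = C_{(y,x)}\,((Lf)(y)-(Lf)(x)).
\]
Since $C_{(y,x)} \geq C_{u_0} \geq 0$ (each such ratio is a quotient of positive quantities, by the geometric estimate) and $(Lf)(y)-(Lf)(x) > 0$, combining with the estimate of the previous paragraph gives
\[
 \gamma\,(f(y)-f(x)) \geq C_{u_0}\,((Lf)(y)-(Lf)(x)) \geq 2\,C_{u_0}\left(1-\cos\left(\frac{\pi}{D+1}\right)\right)(f(y)-f(x)),
\]
and cancelling $f(y)-f(x) = \eta(|y^{-1}x|) > 0$ yields $\gamma \geq 2 C_{u_0}(1-\cos(\pi/(D+1)))$. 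The step I expect to be the main obstacle is the second one: pushing the full heat-flow/modulus-of-continuity argument of \Cref{sec:homogeneous1} through in the presence of the drift term, and verifying that its geometric core genuinely factors through the unweighted Laplacian $L$ so that $C_{u_0}$ is the only correction needed; subsidiary technical points are the good behavior of the extremal set $\xi$ and checking that the denominators occurring in $C_{u_0}$ are positive along $\xi$.
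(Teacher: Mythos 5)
Your first two steps are exactly the paper's: the identity $-\gamma f(x)=\sum_{a\in\mathcal{K}}\Delta_a f(x)e^{g(ax)-g(x)}$ is \Cref{prop:ratio_rel}/\Cref{cor:hamiltonian}, and the observation that the weighted operator differs from the unweighted one by the factor captured in $C_{u_0}$ is precisely how the paper proceeds. But your third step contains a genuine gap. The geometric argument of \Cref{prop:MOCheat}, applied to $f$ at an extremal pair $(y,x)\in\xi$ with $s=\lvert y^{-1}x\rvert$, yields only the \emph{local difference} inequality
\[
(Lf)(y)-(Lf)(x)\;\geq\;L_P\eta(s),
\]
not the pointwise eigenvalue inequality $(Lf)(y)-(Lf)(x)\geq\mu\,(f(y)-f(x))$ that you assert. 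The passage from $L_P\eta$ to $\mu\eta$ is a Rayleigh-quotient statement: it holds in the aggregate $\ell^2$ sense $\eta^{\top}L_P\eta\geq\mu\lvert\eta\rvert^2$ (using the oddness of $\eta$ so that $\eta\perp\mathbf{1}$), but it is false entry-by-entry unless $\eta$ happens to be the first eigenvector of $L_P$. Consequently you cannot simply cancel $f(y)-f(x)$ at a single extremal pair.

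The correct finish (which is what ``the exact same argument as \Cref{thm:bound}'' refers to in the paper) is to use the chain you have established for \emph{every} $s$, namely
\[
\gamma\,\eta(s)\;=\;C_{(y,x)}\bigl((Lf)(y)-(Lf)(x)\bigr)\;\geq\;C_{u_0}\,L_P\eta(s),
\]
then multiply by $\eta(s)>0$, extend to $s\in\llbracket -D,D\rrbracket$ by oddness, sum over $s$, and apply the Rayleigh quotient:
\[
\gamma\,\lvert\eta\rvert^2\;\geq\;C_{u_0}\,\eta^{\top}L_P\eta\;\geq\;C_{u_0}\,\mu\,\lvert\eta\rvert^2,
\]
giving $\gamma\geq C_{u_0}\mu$. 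Your preparatory work (positivity of $u_0$, the conjugated self-adjoint operator $\mathcal{L}_g$, the formula $C_{(y,x)}=\gamma(f(y)-f(x))/((Lf)(y)-(Lf)(x))$, and the sign analysis of the denominators) is sound and makes this repair immediate, but as written the proof does not close because it invokes an invalid pointwise spectral comparison.
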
  
Above, $\Delta_a f(x) = f(ax)-f(x)$ for $x \in V(S)$. This result reduces the task of bounding $\gamma(H)$ to determining an appropriate constant $C_{u_0}$ and is motivated similarly to the approach taken in \cite{Andrews2011}, where the authors prove the longstanding fundamental gap conjecture. 
 
 Extending results of the fundamental gap literature to discrete Laplacians was first considered by Ashbaugh and Benguria in \cite{Ashbaugh1990}, where the authors proved a fundamental gap-type theorem for the case of symmetric, single-well potentials on a one-dimensional Dirichlet Laplacian. More recently, in \cite{Jarret2014c} we proved another fundamental gap-type theorem for the case of convex potentials on one-dimensional combinatorial Laplacians and Hamming-symmetric convex potentials on hypercube combinatorial Laplacians by following the method of \cite{Lavine1994a}. In the context of hypercube combinatorial Laplacians $L$, we find in \Cref{sec:Dirichlet}:
 
\begin{restatable}{thm}{DirichletH}\label{thm:Dirichlet_Hypercube}
	Let $(u_0, \lambda)$ and $(u_1, \lambda+\gamma)$ be the two lowest eigenvector-eigenvalue pairs of $H=L+W$ where $L$ is the combinatorial Laplacian of a Hypercube graph $G$ and $W$ is some diagonal positive-semidefinite matrix. Let the componentwise ratio $f=u_1/u_0$ have modulus of continuity $\eta$. Let $g = \log(u_0)$. Then, $\gamma \geq 2 C_{u_0}$ with 
	\begin{equation*}
		C_{u_0} = \frac{\displaystyle \sum_{a\in\mathcal{K}}\Delta_a f(y)e^{g(ay)-g(y)} -\sum_{a\in\mathcal{K}}\Delta_a f(x)e^{g(ax)-g(x)}}{\displaystyle\sum_{a\in\mathcal{K}}\Delta_a f(y)-\sum_{a\in\mathcal{K}}\Delta_a f(x)}
	\end{equation*}
	for $y,x \in V(G)$ such that $f(y)-f(x) = \eta(2)$.
\end{restatable}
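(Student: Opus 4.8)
The plan is to rerun the proof of \Cref{thm:Dirichlet} for the special case $S=G$ a hypercube, but to replace the one--dimensional path comparison that produces the diameter factor $2\bigl(1-\cos(\pi/(D+1))\bigr)$ by the sharper hypercube estimate underlying \Cref{thm:hypercube}, which produces the constant $2$. As in the proof of \Cref{thm:Dirichlet}, I would begin with the ground--state substitution: since $-H$ plus a large multiple of the identity is irreducible and entrywise nonnegative, Perron--Frobenius gives $u_0>0$, so $g=\log u_0$ is well defined and $f=u_1/u_0$ satisfies the conjugated eigenvalue equation
\begin{equation*}
	\widetilde{L}f(x):=\sum_{a\in\mathcal{K}}\bigl(f(x)-f(ax)\bigr)\,e^{g(ax)-g(x)}=\gamma\,f(x),\qquad x\in V(G),
\end{equation*}
so $f$ is the first nontrivial eigenvector of $\widetilde L$ with eigenvalue $\gamma$ and carries a modulus of continuity $\eta$. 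The weights $e^{g(ax)-g(x)}$ are precisely what will be collected into $C_{u_0}$.

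Next I would run the modulus--of--continuity estimate of \Cref{thm:Dirichlet} for $\widetilde L$: inserting $f$ into $\partial_t v=-\widetilde L v$ gives $v(t)=e^{-\gamma t}f$, whose oscillation across distance $s$ is bounded by $e^{-\gamma t}\eta(s)$, and comparison with the relevant one--dimensional model bounds $\gamma$ from below by the model rate times a weight correction. The hypercube input is the one that yields \Cref{thm:hypercube}: because $\mathcal H$ is abelian and $G$ is a Cartesian power of $K_2$ -- so that two vertices at distance two are joined by two geodesics forming a $4$--cycle -- the configuration governing the slowest decay of the oscillations may be taken at distance exactly two, where the relevant one--dimensional rate is the first eigenvalue $2$ of $K_2$ (equivalently of the $4$--cycle) rather than $2\bigl(1-\cos(\pi/(D+1))\bigr)$. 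This removes the diameter from the bound and reduces the infimum over all tight pairs in \Cref{thm:Dirichlet} to the value of $C_{u_0}$ at such a distance--two pair.

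It then remains to evaluate the estimate there. Take $(y,x)$ with $d(y,x)=2$ and $f(y)-f(x)=\eta(2)>0$, and write $y=bc\,x$ with $b,c\in\mathcal{K}$ the two generators in which $y$ and $x$ differ. Evaluating $\widetilde L f=\gamma f$ at $y$ and at $x$ and subtracting gives
\begin{equation*}
	\gamma\bigl(f(y)-f(x)\bigr)=\sum_{a\in\mathcal{K}}\Delta_a f(x)\,e^{g(ax)-g(x)}-\sum_{a\in\mathcal{K}}\Delta_a f(y)\,e^{g(ay)-g(y)},
\end{equation*}
and unwinding the definition of $C_{u_0}$ shows the right side equals $C_{u_0}\bigl(\sum_a\Delta_a f(x)-\sum_a\Delta_a f(y)\bigr)=C_{u_0}\bigl((Lf)(y)-(Lf)(x)\bigr)$ with $L$ the bare hypercube Laplacian. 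In $\sum_a\bigl(f(ay)-f(ax)\bigr)$ the terms $a=b$ and $a=c$ are $f(cx)-f(bx)$ and $f(bx)-f(cx)$, so they cancel, while each of the remaining $|\mathcal{K}|-2$ terms is $\le\eta(2)$ because left translation is a graph isometry; hence
\begin{equation*}
	(Lf)(y)-(Lf)(x)=|\mathcal{K}|\bigl(f(y)-f(x)\bigr)-\sum_{a}\bigl(f(ay)-f(ax)\bigr)\ge |\mathcal{K}|\eta(2)-\bigl(|\mathcal{K}|-2\bigr)\eta(2)=2\bigl(f(y)-f(x)\bigr).
\end{equation*}
Since $\gamma>0$ the first display forces $C_{u_0}=\tfrac{\gamma(f(y)-f(x))}{(Lf)(y)-(Lf)(x)}>0$, and together the two displays give $\gamma=C_{u_0}\,\tfrac{(Lf)(y)-(Lf)(x)}{f(y)-f(x)}\ge 2C_{u_0}$.

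The step I expect to be the main obstacle is the distance--two localization in the second paragraph. For the bare Laplacian it rests on the full symmetry of $L$ as a constant--coefficient sum of commuting coordinate Laplacians, and conjugation by $u_0$ destroys this, since $\widetilde L$ has position--dependent coefficients $e^{g(ax)-g(x)}$. The key is that the modulus comparison is purely local -- only the tight pair and its neighbours ever enter -- so the conjugation factors survive only inside $C_{u_0}$ (which is exactly why $C_{u_0}$ appears evaluated at the distance--two pair rather than as the infimum over all tight pairs), while the $4$--cycle cancellation producing the constant $2$, and hence the displayed inequality $(Lf)(y)-(Lf)(x)\ge 2(f(y)-f(x))$, is untouched by the weights. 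One should also record that in the nondegenerate case $\eta(2)>0$, so that the pair $(y,x)$ exists and all ratios are defined; this holds because $u_1$ is not a scalar multiple of $u_0$ on a connected graph.
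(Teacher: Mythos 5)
Your proposal is essentially the paper's intended argument: combine the conjugated eigenvalue identity of \Cref{cor:hamiltonian} with the hypercube-specific cancellation that drives the proof of \Cref{thm:hypercube}, and observe that the conjugation weights $e^{g(ax)-g(x)}$ enter only through the constant $C_{u_0}$ while the bare Laplacian comparison $(Lf)(y)-(Lf)(x)\ge 2\bigl(f(y)-f(x)\bigr)$ is unaffected. The algebraic presentation (subtracting the two eigenvalue equations rather than running the heat flow) is the same trick the paper itself uses in the alternative proof of \Cref{thm:bound}, so the approaches coincide in substance, and your overall chain of identities and inequalities is correct.

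There is one genuine, if small, gap. The theorem only requires $f(y)-f(x)=\eta(2)$, not $d(y,x)=2$; since $\eta(2)=\sup\{f(y)-f(x):\lvert y^{-1}x\rvert\le 2\}$, the optimizing pair can sit at distance $1$ (this happens exactly when $\eta(1)=\eta(2)$), and then your factorization $y=bcx$ with $b\ne c$ is not available, so the $4$--cycle cancellation at $a=b,c$ does not literally apply. The paper's proof of \Cref{thm:hypercube} handles precisely this by splitting into the two cases $\lvert y^{-1}x\rvert=2$ and $\lvert y^{-1}x\rvert=1$. The fix is straightforward and follows the same pattern: if $d(y,x)=1$, write $y=bx$ (so $by=x$, $bx=y$); the $a=b$ term in $\sum_a\bigl(f(ay)-f(ax)\bigr)$ equals $f(x)-f(y)=-\eta(2)$, and each of the remaining $\lvert\mathcal{K}\rvert-1$ terms is $\le\eta(1)=\eta(2)$, giving
\begin{equation*}
\sum_{a\in\mathcal{K}}\bigl(f(ay)-f(ax)\bigr)\le\bigl(\lvert\mathcal{K}\rvert-2\bigr)\eta(2),
\end{equation*}
so $(Lf)(y)-(Lf)(x)\ge 2\eta(2)$ again and the rest of your argument goes through unchanged. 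You should add this case to make the proof complete.
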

Here, $C_{u_0}$ is restricted to admit $y,x$ only if they are separated by at most a path of length 2. Hence, \Cref{thm:Dirichlet_Hypercube} presents a much more local property than \Cref{thm:Dirichlet}.

We also make use of a modulus of concavity $\omega$ of $\log(u_0)$ where $u_0$ is the ground-state of the operator $H$. By \textit{modulus of concavity}, we mean that for each pair of $y,x \in V(S)$ and some generator $a \in \mathcal{K}$ falling along a shortest path connecting $y$ to $x$, 
\begin{equation*}
	\Delta_a \log(u_0(y)) + \Delta_{a^{-1}}\log(u_0(x)) \geq \omega(d(y,x)) \; \text{for all $x,y\in V(S)$}.
\end{equation*}
 We apply the results of \Cref{sec:Dirichlet} to the case of path graphs with log-concave ground states to obtain the following bound:

\begin{restatable}{thm}{LC}\label{thm:LCbound}
	Suppose $H = L + W$ with ground state $u_0$, where $L$ is the combinatorial Laplacian for some path graph $S$ with diameter $D$ and $W:V(S)\longrightarrow \mathbb{R}_{\geq 0}$. Then,
	\begin{align*}
		\gamma(H) & \geq 4\left(2\cosh(\overline{\omega})-1\right)\left(1- \cos\left(\frac{\pi}{2D +1}\right)\right) \\
		& \geq 4 \left(1- \cos\left(\frac{\pi}{2D +1}\right)\right)
	\end{align*}	
	for $log(u_0)$ having non-negative modulus of concavity $\omega$ and $\overline{\omega}=\inf_s \omega(s)$.
\end{restatable}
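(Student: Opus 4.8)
The plan is to follow the strategy behind \Cref{thm:Dirichlet}, but in the one-dimensional setting where a local analysis of the quotient defining $C_{u_0}$ suffices, and to extract from the log-concavity hypothesis a clean lower bound on that quotient. A path graph $S$ of diameter $D$ is a strongly convex subgraph of the Cayley graph of $\mathbb{Z}$ with generating set $\mathcal{K} = \{s,s^{-1}\}$, the two unit shifts; $\mathbb{Z}$ is abelian, hence trivially invariant, so the hypotheses of \Cref{thm:Dirichlet} hold. I would first record the one-dimensional structure that drives everything: since $H = L+W$ is a Jacobi (tridiagonal) matrix, $u_1$ changes sign exactly once and $f = u_1/u_0$ is strictly monotone, so we may take $f$ increasing; then for a pair $(y,x)$ realizing the modulus $\eta$ the vertex $y$ lies to the right of $x$ along the path and the signs of $\Delta_s f$ and $\Delta_{s^{-1}}f$ at $y$ and at $x$ are all determined. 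This sign information is what lets the quotient defining $C_{u_0}$ be treated as a ratio of two like-signed sums, so that the desired inequality is not reversed on division.

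The heart of the proof is the estimate $C_{u_0} \geq 2\cosh\overline{\omega} - 1$. For the relevant pair $(y,x)$ I would pair the generator $a \in \mathcal{K}$ that carries $y$ one step toward $x$ with $a^{-1}$, which carries $x$ one step toward $y$: these lie along a shortest path, so the modulus of concavity hypothesis gives $\Delta_a g(y) + \Delta_{a^{-1}}g(x) \geq \omega(d(y,x)) \geq \overline{\omega}$. Exponentiating and applying convexity of $t \mapsto e^{t}$ (equivalently AM--GM) to this conjugate pair, while using concavity of $g$ alone to bound the complementary contribution ($a^{-1}$ at $y$ together with $a$ at $x$), one bounds the numerator $\sum_{a}\Delta_a f(y)\,e^{g(ay)-g(y)} - \sum_{a}\Delta_a f(x)\,e^{g(ax)-g(x)}$ from below by $\bigl(2\cosh\overline{\omega}-1\bigr)$ times the denominator $\sum_{a}\Delta_a f(y) - \sum_{a}\Delta_a f(x)$. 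The delicate point is the two degree-one endpoints of the path, at which one of the two summands in $\sum_{a\in\mathcal{K}}$ is absent; since $f(0) = \min f$ and $f(D) = \max f$ the missing term only helps, but this must be verified carefully, and the behavior of $g$ near the endpoints enters there too.

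Finally I would produce the diameter factor. Because controlling the full infimum $C_{u_0}$ over $\xi$ is not feasible on a path, I would, exactly as in \Cref{thm:Dirichlet_Hypercube}, work with a nearest-neighbor (bounded-distance) version of the quotient; the price is that the interior zero of $u_1$, i.e. the vertex where $f$ vanishes, now acts as a Dirichlet constraint on the modulus $\eta$, so that on the distance space $\{0,1,\dots,D\}$ the function $\eta$ obeys a heat-type inequality whose slowest-decaying mode is $\cos\!\bigl(\tfrac{\pi k}{2D+1}\bigr)$, decaying at the rate $2\bigl(1-\cos(\tfrac{\pi}{2D+1})\bigr)$, the spectral gap of the path graph on $2D+1$ vertices. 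Combining this rate with $C_{u_0}\geq 2\cosh\overline{\omega}-1$ and the contribution of both generators in $\mathcal{K}$ yields $\gamma \geq 4\bigl(2\cosh\overline{\omega}-1\bigr)\bigl(1-\cos(\tfrac{\pi}{2D+1})\bigr)$; the second displayed bound then follows at once, since $\cosh$ is increasing on $[0,\infty)$ and $\overline{\omega} \geq 0$, so $2\cosh\overline{\omega}-1 \geq 1$. I expect the main obstacle to be precisely the estimate $C_{u_0} \geq 2\cosh\overline{\omega} - 1$: arranging the interaction between the sign pattern of $\Delta_a f$ and the exponential weights, and making the degree-one endpoint cases rigorous rather than waving them through.
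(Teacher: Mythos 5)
Your proposed route---estimating $C_{u_0}\ge 2\cosh\overline\omega-1$ pointwise and then feeding it into (a local version of) \Cref{thm:Dirichlet}---is genuinely different from the paper's, and your own closing worry turns out to be exactly where it fails. The paper never forms the ratio $C_{u_0}$ in the one-dimensional case. Instead, \Cref{prop:pathop} works directly with the numerator $\sum_a\Delta_a f(y)e^{g(ay)-g(y)} - \sum_a\Delta_a f(x)e^{g(ax)-g(x)}$, which by \Cref{cor:hamiltonian} equals $-\gamma\eta(s)$ when $f$ is the eigenvector ratio. Splitting into ``inner'' terms $\Psi_i$ (the two generators that move $y$ and $x$ toward each other) and ``outer'' terms $\Psi_o$, writing $e^a+e^b = 2\cosh\frac{a-b}{2}\,e^{(a+b)/2}$ on each pair, and invoking log-concavity in the form $\Delta_b g\le -\Delta_{b^{-1}}g$ on the outer pair, the proposition delivers a modified heat inequality
\[
	-\gamma\eta(s) \le -2L_P\eta(s) - 4\bigl(\cosh\omega(s)-1\bigr)\nabla\eta(s),
\]
with a first-order correction term $\nabla\eta$. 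The proof of \Cref{thm:LCbound} then upgrades $\nabla\eta$ to $L_P\eta$ by adding the nonnegative multiple $\bigl(\cosh\omega(s)-1\bigr)\bigl(\eta(s+1)-\eta(s)\bigr)$, yielding $-\gamma\eta(s)\le -2\bigl(2\cosh\omega(s)-1\bigr)L_P\eta(s)$, and the $\ell^2$ argument of \Cref{thm:bound} finishes. That step, not a pointwise bound on $C_{u_0}$, is where the factor $2\cosh\overline\omega-1$ originates.

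The obstruction to your $C_{u_0}$ estimate is concrete. By \Cref{prop:reduce} the denominator $\sum_a\Delta_a f(y)-\sum_a\Delta_a f(x)$ is bounded \emph{above} by $\eta$-differences, and both it and the numerator are negative; but a lower bound on a ratio of two negatives needs either a lower bound on the denominator's magnitude (unavailable here) or a term-by-term inequality $\Psi_i\le c\,D_i$, $\Psi_o\le c\,D_o$. The latter would require individual control of $e^{\Delta_{b^{-1}}g(y)}$ and $e^{\Delta_b g(x)}$, whereas the modulus of concavity controls only their product via the sum $\Delta_{b^{-1}}g(y)+\Delta_b g(x)\ge 2\omega(s)$; one factor can be large and the other tiny. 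The paper sidesteps exactly this by never dividing. A second, smaller point: the $2D{+}1$ has nothing to do with $u_1$'s interior sign change. The modulus $\eta$ vanishes at $0$ by definition and is odd by construction, and in \Cref{prop:pathop} the inner and outer terms are \emph{not} paired across $y$ and $x$ (unlike \Cref{prop:MOCheat}), precisely so the exponential weights stay attached to individual generators; this makes the comparison inequality step by one, so $L_P$ lives on the full path $\llbracket -D,D\rrbracket$ of $2D{+}1$ vertices with gap $2\bigl(1-\cos\frac{\pi}{2D+1}\bigr)$.
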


We can actually apply a closer analysis in deriving \Cref{thm:LCbound}, assuming that we know a bound on the gradient of the modulus of concavity $\omega$:
\begin{restatable}{thm}{LCHarder}\label{thm:LCbound2}
	Suppose $H = L + W$ with ground state $u_0$, where $L$ is the combinatorial Laplacian for some path graph $S$ with diameter $D$ and $W:V(S)\longrightarrow \mathbb{R}_{\geq 0}$. Then,
	\begin{align*}
		\gamma(H) \geq 4\left(1-\cos\left(\frac{\pi}{2 D + 1}\right)\right) + 2\inf_s\left(\Delta^- \cosh(\omega(s))\right)
	\end{align*}	
	for $log(u_0)$ with non-negative modulus of concavity $\omega$ where $\omega(D+1)=0$ and $\overline{\omega}=\inf_s \omega(s)$. Above,
	\begin{equation*}
		\Delta^- \cosh(\omega(s)) =  \cosh(\omega(s)) - \cosh(\omega(s+1)).
	\end{equation*}
\end{restatable}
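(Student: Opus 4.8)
The plan is to refine the argument behind \Cref{thm:LCbound}, keeping the dependence of the modulus of concavity on the distance instead of replacing it by the constant $\overline{\omega}$. First I would specialize the modulus-of-continuity machinery (the path-graph case of the reasoning behind \Cref{thm:Dirichlet}) to the path $S$ on $D+1$ vertices. Writing $f = u_1/u_0$ and $g = \log u_0$, the componentwise ratio satisfies $\sum_{a \in \mathcal{K}} e^{g(ax)-g(x)}\Delta_a f(x) = -\gamma f(x)$, and one tracks the modulus of continuity $\eta$ of $f$. Because shortest paths on a path are unique, the extremal pairs $(y,x) \in \xi$ realizing $\eta$ are parametrized by their distance $s = d(y,x) \in \{1,\dots,D\}$, and reflecting the path about the midpoint of such a pair collects the relevant inequalities on a path of $2D+1$ vertices. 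Recall from the proof of \Cref{thm:LCbound} that substituting the modulus-of-concavity bound $\Delta_a g(y) + \Delta_{a^{-1}} g(x) \ge \omega(s)$ into the drift weights $e^{g(ay)-g(y)}$ and symmetrizing over the two generators produces coefficients built from $\cosh(\omega(s))$, so that $\gamma \ge 2\mu$, where $\mu$ is the bottom eigenvalue of a Jacobi operator $M$ on this $(2D+1)$-vertex path whose entries involve the $\cosh(\omega(s))$ and where the convention $\omega(D+1)=0$ fixes the boundary.

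\Cref{thm:LCbound} then estimates every $\cosh(\omega(s))$ by $\cosh(\overline{\omega})$, turning $M$ into a scalar multiple of the plain path Laplacian on $2D+1$ vertices, so that $\mu$ reduces to $(2\cosh(\overline{\omega})-1)$ times its bottom eigenvalue $2(1-\cos(\pi/(2D+1)))$. For \Cref{thm:LCbound2} I would instead retain the $s$-dependence and estimate $\mu$ by a Rayleigh quotient, testing $M$ against the discrete-cosine eigenvector $\psi$ of the plain $(2D+1)$-path at eigenvalue $1-\cos(\pi/(2D+1))$. Expanding $\langle\psi, M\psi\rangle/\langle\psi,\psi\rangle$ and summing by parts, the baseline $2(1-\cos(\pi/(2D+1)))$ splits off and the remainder becomes a telescoping sum $\sum_s c_s\,(\cosh(\omega(s))-\cosh(\omega(s+1)))$ whose weights $c_s$ are nonnegative partial sums of the squared nearest-neighbor differences of $\psi$; the condition $\omega(D+1)=0$ is precisely what closes this sum at $s=D$. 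Bounding each increment below by $\inf_s \Delta^-\cosh(\omega(s))$ and collecting the weights — which contribute the overall factor $2$ after $\gamma \ge 2\mu$ — gives $\gamma \ge 4(1-\cos(\pi/(2D+1))) + 2\inf_s \Delta^-\cosh(\omega(s))$.

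I expect the summation-by-parts step to be the main obstacle: one must arrange the telescoping so that every multiplier $c_s$ of an increment $\cosh(\omega(s))-\cosh(\omega(s+1))$ is genuinely nonnegative, which is why I would test against the exact discrete-cosine eigenvector — whose nearest-neighbor differences have a clean sign pattern and computable magnitudes — rather than an approximate one, and why $\omega(D+1)=0$ must be imposed, since it is exactly what makes the $s=D$ boundary term drop out instead of contributing an uncontrolled sign. A secondary, more clerical difficulty is confirming that the reduction in the first paragraph genuinely takes place on $2D+1$ rather than $D+1$ vertices: this rests on the uniqueness of shortest paths on a path, on the midpoint--distance reflection being a bijection, and on being able to invoke the modulus-of-concavity inequality along the shortest path joining each extremal pair of $\xi$ — which is where the convexity hypotheses and the restriction to path graphs are actually used.
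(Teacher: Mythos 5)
Your proposal contains a sign error that defeats the whole argument.  From \Cref{prop:pathop} one has, pointwise in $s$, the inequality $-\gamma\,\eta(s)\le -2L_P\eta(s)-4(\cosh(\omega(s))-1)\nabla\eta(s)$, and the only admissible move is to pair it with $\eta$ itself (which is non-negative for $s\ge0$ and odd overall, hence orthogonal to $\mathbf 1$), giving $\gamma\,|\eta|^2\ge \eta^\top\bigl(2L_P+4\nabla'\bigr)\eta$.  What you propose instead is to regard $2L_P+4\nabla'$ as a Jacobi operator $M$, to call its bottom eigenvalue $\mu$, and to estimate $\mu$ by a Rayleigh quotient against the discrete-cosine eigenvector $\psi$ of the plain path.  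But a Rayleigh quotient evaluated on a fixed trial vector always sits \emph{above} the bottom eigenvalue; testing $\psi$ gives $\mu\le\langle\psi,M\psi\rangle/|\psi|^2$, an \emph{upper} bound, while the theorem requires a \emph{lower} bound.  There is no way to reverse this by choosing $\psi$ cleverly.  Moreover the operator in question is not self-adjoint: $\nabla'\eta(s)=(\cosh\omega(s)-1)(\eta(s)-\eta(s-1))$ is a one-sided difference with $s$-dependent weights, so the variational characterization of a ``bottom eigenvalue $\mu$ of $M$'' you invoke is not even available, and you cannot move $M$ across the inner product to land on $\psi$.

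The paper's route is therefore genuinely different, not merely a variant of yours.  After pairing with $\eta$ it splits the quadratic form and treats the two pieces by different means.  The $L_P$ piece is controlled by the Rayleigh-quotient \emph{lower} bound $\eta^\top L_P\eta\ge 2(1-\cos(\pi/(2D+1)))|\eta|^2$, valid because $\eta$ is odd and hence orthogonal to $\mathbf 1$ on $\llbracket -D,D\rrbracket$ — there is no fixed trial vector anywhere.  The $\nabla'$ piece is handled by first using monotonicity of $\eta$ to replace $\eta(s)(\eta(s)-\eta(s-1))$ by $\tfrac12(\eta^2(s)-\eta^2(s-1))$, then performing an Abel summation so that the $\cosh\omega$ weights appear as the differences $\Delta^-\cosh(\omega(s))$; the hypothesis $\omega(D+1)=0$ closes the boundary term at $s=D$.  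It is the monotonicity of $\eta$ (a structural fact about moduli of continuity) — not any sign pattern of cosine differences — that makes the summation-by-parts well-signed.  So the two places you flag as ``the main obstacle'' and ``a secondary difficulty'' are not where the real difficulty lies: the real obstruction is that you are trying to lower-bound an eigenvalue by testing, which cannot work, and the correct proof sidesteps eigenvalues of the weighted operator entirely by staying with $\eta$ as the pairing vector throughout.
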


This equation is particularly useful if we choose the modulus of concavity of $\omega$ to be convex. Such a restriction is always possible without altering our analysis, because we are concerned with finite graphs, but these considerations will be discussed in future work. Under such restrictions, \Cref{thm:LCbound2} provides the bound
\begin{equation}\label{eqn:intro_bound}
	\gamma(H) \geq 4\left(1-\cos\left(\frac{\pi}{2 D + 1}\right)\right) + 2\left(\cosh(\overline{\omega})-1\right)
\end{equation}
with $\overline{\omega}$ defined as in \Cref{thm:LCbound}. It is easy to see that \Cref{thm:LCbound2} is indeed an improvement over \Cref{thm:LCbound}. 

\section{Preliminaries}
In this paper we restrict our attention to spectra of invariant, homogeneous graphs and their strongly convex subgraphs. We introduce some algebraic tools for discussing such graphs in \Cref{ssec:IHG}. In \Cref{ssec:SGT}, we introduce the \textit{combinatorial Laplacian} and properties of its spectra. 

\subsection{Invariant homogeneous graphs}\label{ssec:IHG}
Let $G = (V,E)$ be a graph with vertex set $V(G)$ and edge set $E(G)$. We call $G$ \textit{homogeneous} if there exists a group $\mathcal{H}$ acting on $G$ such that for $\{u,v\} \in E(G)$, $\{a u , a v\} \in E(G) \; \forall \; a \in \mathcal{H}$ and $\forall u',v' \in V(G)\; \exists \; a_0 \in \mathcal{H}$ such that $a_0 u' = v'$. We call the set $\mathcal{K} \subset \mathcal{H}$ the edge generating set if $a \in \mathcal{K} \iff \{v,av\} \in E(G) \; \forall \; v \in V(G)$. 

We restrict to the case that $G$ is undirected; hence, if $\{v,av\} \in E(G)$ we also have that $\{av, v\} \in E(G)$. This restriction is equivalent to requiring that $a \in \mathcal{K} \iff a^{-1} \in \mathcal{K}$.\footnote{Note that if $v = a v$ and $g(av) = v$, then $g = a^{-1}$.} To simplify our problem further, we reduce our class of graphs by insisting that these graphs be \textit{invariant} homogeneous graphs, or that $a \mathcal{K} a^{-1} = \mathcal{K} \; \forall \; a \in \mathcal{K}$.

We also need a notion of distance in the graph. Typically, we use $d(x,y)$, the length of the shortest path connecting vertex $x$ to vertex $y$. In our setting, it helps to formalize this in group-theoretic terms. Because we are considering invariant homogeneous graphs, we can take $d(x,y)=|w|$, where $|\cdot|$ represents the \textit{word metric} over $\mathcal{K}$ and $|w|$ is the length of the shortest word $w$ written in terms of elements of $\mathcal{K}$ such that $w x = y$. 

\begin{proposition}\label{prop:sp1}
	Let $G$ be an invariant homogeneous graph with generating set $\mathcal{K}$. Then, for $x,y \in V(G)$ and $a \in \mathcal{K}$, $d(ax,ay)=d(x,y)$.
\end{proposition}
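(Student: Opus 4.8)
The plan is to reduce the statement to a single observation: for $a\in\mathcal{K}$, conjugation by $a$ permutes the generating set $\mathcal{K}$, hence preserves word length. First I would recall the definition adopted in the preceding subsection, $d(x,y)=\min\{|w| : w\in\mathcal{H},\ wx=y\}$, where $|w|$ is the number of generators in a shortest expression of $w$ over $\mathcal{K}$; this minimum is attained and finite because the $\mathcal{H}$-action is transitive and $G$ is connected, so each step of a shortest path from $x$ to $y$ corresponds to left-multiplication by an element of $\mathcal{K}$, and the graph distance coincides with this word metric by construction.

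The key step is to define $\phi_a(w)=awa^{-1}$ for a fixed $a\in\mathcal{K}$. Invariance gives $a\mathcal{K}a^{-1}=\mathcal{K}$, so $\phi_a$ restricts to a bijection of $\mathcal{K}$ onto itself; its inverse is $\phi_{a^{-1}}$, which also preserves $\mathcal{K}$ since $a^{-1}\in\mathcal{K}$ (equivalently, conjugating $a\mathcal{K}a^{-1}=\mathcal{K}$ by $a^{-1}$ yields $a^{-1}\mathcal{K}a=\mathcal{K}$). If $w=k_1k_2\cdots k_n$ with each $k_i\in\mathcal{K}$, then $\phi_a(w)=\phi_a(k_1)\phi_a(k_2)\cdots\phi_a(k_n)$ is again a product of $n$ generators. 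Choosing $w$ of minimal length gives $|\phi_a(w)|\le|w|$, and the same bound applied to $\phi_{a^{-1}}$ acting on $\phi_a(w)$ gives the reverse inequality, so $|\phi_a(w)|=|w|$ for every $w\in\mathcal{H}$.

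Finally I would transfer this to distances. The map $w\mapsto\phi_a(w)$ is a bijection from $S_{x,y}:=\{w : wx=y\}$ onto $S_{ax,ay}:=\{w' : w'(ax)=ay\}$: if $wx=y$ then $(awa^{-1})(ax)=awx=ay$, and conversely if $w'(ax)=ay$ then $(a^{-1}w'a)x=y$ with $\phi_a(a^{-1}w'a)=w'$. Since $\phi_a$ preserves $|\cdot|$, minimizing over each set yields $d(ax,ay)=\min_{w\in S_{x,y}}|\phi_a(w)|=\min_{w\in S_{x,y}}|w|=d(x,y)$, which is the claim.

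I do not anticipate a genuine obstacle. The only points requiring care are (i) confirming that invariance, stated only for $a\in\mathcal{K}$, suffices — it does, because $\mathcal{K}$ is symmetric and conjugation distributes over products, so conjugation by any one generator or its inverse sends arbitrary words to words of equal length — and (ii) making sure the word metric over $\mathcal{K}$ really is the graph distance, which is immediate from the definition set up in the Preliminaries.
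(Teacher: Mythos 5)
Your proof is correct and follows essentially the same route as the paper's: both rest on the observation that $wx=y$ implies $(awa^{-1})(ax)=ay$ and that invariance of $\mathcal{K}$ under conjugation forces $|awa^{-1}|=|w|$. You simply spell out the details the paper leaves implicit, namely that conjugation distributes over words to preserve length and that the resulting bijection between $\{w:wx=y\}$ and $\{w':w'(ax)=ay\}$ gives equality of the two minima rather than a one-sided bound.
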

\begin{proof}
	This follows immediately from the equivalence of the shortest path and the word metric. Begin by writing $w x = y$. Then, for some $a \in \mathcal{K}$, $a w a^{-1} (a x) = a y$. By invariance, $|a w a^{-1}| = |w|$ and we have that $d(ax,ay)=d(x,y)$. 
\end{proof}
Because \Cref{prop:sp1} demonstrates the proper equivalence between the word-metric measured in generators of $\mathcal{K}$ and the distance between vertices $y,x$, we will write $|y^{-1}x|$ to represent $d(y,x)$.

Now, let $S$ be an induced subgraph of $G$. We label the boundary of $S$ by $\delta S = \{v \in V(G) \setminus V(S) | v\sim u \in S \}$. $S$ is said to be \textit{strongly convex} if it satisfies the following two (equivalent) properties:

\begin{enumerate}
\item For all pairs of vertices $y,x \in S$, the shortest path connecting $y$ to $x$ is also in $S$.
\item For all $a,b \in \mathcal{K}$, $x \in \delta S$, if $ax \in S$ and $b x \in S$ then $b^{-1}a  \in \mathcal{K}$.\cite{Chung1994}
\end{enumerate}

\begin{proposition}\label{prop:sp2}
    Let $S \subseteq G$ be a strongly convex induced subgraph of an invariant homogeneous graph $G$. If $x,ax,y \in S$ and $d(ax,y) = d(x,y) + 1$, then $ay \in S$.
\end{proposition}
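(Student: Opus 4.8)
The plan is to show that $ay$ lies on a shortest path of $G$ joining $ax$ to $y$ --- two vertices that are both in $S$ by hypothesis --- and then to invoke strong convexity, in the form ``every shortest path of $G$ between two vertices of $S$ is contained in $S$'', to conclude $ay \in V(S)$. Thus the whole argument should reduce to a short distance computation together with one application of \Cref{prop:sp1}.

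First I would dispose of the degenerate case $ay = y$, where there is nothing to prove. Assuming $ay \neq y$, the defining property of the edge generating set (with $a \in \mathcal{K}$) gives $\{y, ay\} \in E(G)$, so $d(ay, y) = 1$. Next, \Cref{prop:sp1} applied with the generator $a$ gives $d(ax, ay) = d(x, y)$, and feeding this into the hypothesis $d(ax, y) = d(x, y) + 1$ yields
\begin{equation*}
  d(ax, y) = d(ax, ay) + d(ay, y).
\end{equation*}
Concatenating a shortest $(ax, ay)$-path with the edge $\{ay, y\}$ therefore produces a walk from $ax$ to $y$ of length exactly $d(ax, y)$; since this length is minimal, the walk has no repeated vertices and is genuinely a shortest $(ax, y)$-path of $G$, and it passes through $ay$.

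Finally, because $ax, y \in V(S)$ and $S$ is strongly convex, this shortest path lies entirely in $S$, so in particular $ay \in V(S)$, as claimed. I do not anticipate a real obstacle here: the only points meriting care are the trivial case $ay = y$, the remark that gluing two geodesics whose lengths add to the correct value produces a geodesic (a shorter walk would otherwise contradict minimality), and the fact that one must use the ``all shortest paths'' formulation of strong convexity rather than the ``a shortest path'' phrasing. All of the substantive input is contained in \Cref{prop:sp1}.
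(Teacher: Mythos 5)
Your proposal is correct and takes essentially the same route as the paper: apply \Cref{prop:sp1} to get $d(ax,ay)=d(x,y)$, observe that together with $d(ay,y)=1$ and the hypothesis this places $ay$ on a geodesic from $ax$ to $y$, and then invoke strong convexity. You simply spell out the concatenation step and the trivial case $ay=y$ in more detail than the paper does.
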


\begin{proof}
    Suppose that $x,y \in S$ and $d(ax,y) = d(x,y) + 1$. By \Cref{prop:sp1}, we know that $d(ax,ay)=d(x,y)$ and thus there exists a shortest path traversing $ax\rightarrow ay \rightarrow y$. Hence, $ay \in S$. 
\end{proof}

\subsection{Graph Laplacians}\label{ssec:SGT}

The focus of this paper is the \textit{combinatorial Laplacian} $L$ of a graph $S$ which for all $x,y \in S$ is given by
\begin{equation}\label{eq:Laplacian}
	L(x,y) = \begin{cases}
	d_x & \text{if $x=y$} \\
	-1 & \text{if $x\sim y$} \\
	0 & \text{otherwise}
	\end{cases}
\end{equation}
where $d_x$ is the degree of vertex $x$. $L$ can also be identified with an operator on the space of functions $u:V(S) \longrightarrow \mathbb{R}$ satisfying
\begin{equation}\label{eqn:opdef}
	Lu(x) = \sum_{y\sim x} \left( u(x) - u(y) \right).
\end{equation}
The reader should note that the operator $L$ in \cref{eqn:opdef} should be understood to apply to $u$ before $u$ is evaluated at the vertex $x$. In the case that $S$ is an induced subgraph of a homogeneous graph $G$ with edge generating set $\mathcal{K}$, we can equivalently write
\begin{equation}\label{eqn:LaplacianOp}
	Lu(x) = \sum_{a \in \mathcal{K}_x} \left(u(x)-u(ax) \right)
\end{equation}
where $\mathcal{K}_x = \left\{a \in \mathcal{K} \; \vert \; ax \notin \delta S\right\}$. Here $\mathcal{K}_x$ is simply the set that generates all vertices in $S$ adjacent to some particular vertex $x \in V(S)$. 

The operator $L$ corresponding to a connected graph has eigenvalues $\lambda_0(L) < \lambda_1(L) \leq$ $\dots$ $\leq \lambda_{|V(G)|-1}(L)$ and corresponding eigenvectors $u_0(L),u_1(L) ,$ $ \dots, $ $ u_{|V(G)|-1}(L)$ with $\lambda_0(L)=0$ and
\begin{equation}\label{eq:ev1}
	\lambda_1(L) = \inf_{u \perp \mathbf{1}} \frac{\displaystyle\sum_{x\sim y}\left(u(x)-u(y)\right)^2}{\displaystyle \sum_x u^2(x)}
\end{equation}
where $\mathbf{1}$ is the constant function. $u$ attaining the infimum in \cref{eq:ev1} is called a \textit{combinatorial harmonic eigenfunction} of $S$ and can be identified with an eigenvector of $L$. If $(u,\lambda)$ is an eigenvector-eigenvalue pair of $L$, then $u$ satisfies
\begin{equation}\label{eq:recurrence}
-\lambda u(x) = \sum_{y \sim x} \left( u(y) - u(x) \right).
\end{equation}
Although \cref{eq:recurrence} is the standard definition of an eigenvector and can be obtained by inspecting \cref{eqn:opdef}, the expression can also be derived through through variational techniques on \cref{eq:ev1}\cite{Chung}. 

\section{Main Results}
\subsection{Strongly convex subgraphs of invariant homogeneous graphs}\label{sec:homogeneous1}
Heat kernel techniques are one of the more powerful approaches to proving eigenvalue bounds \cite{Chung}. In this section, we adapt the approach of \cite{andrews2014moduli} to combinatorial Laplacians. This technique has the advantage of often being easier to handle than known techniques, such as those of \cite{Banuelos2000,Chung,Chung2014,Chung1994,Chung2000}, while often retaining (and potentially sharpening) these bounds. In particular, the results of this section are comparable to those of \cite{Chung1994}.

For a graph $G$ with diameter $D$, we begin by considering solutions to the initial value problem
\begin{equation}\label{eqn:heat}
    \begin{cases}
    	\frac{d(s,t)\phi}{dt} = - L \phi(s,t) \\
    	\phi(s,0) = \phi_0(s).
    \end{cases}
\end{equation}
Clearly, if we let $\phi_0=u$ for an eigenvector-eigenvalue pair $(u,\lambda)$ of $L$, we have that $\phi(s,t) = u(s) e^{-\lambda t}$ solves \cref{eqn:heat}. Our strategy, then, is to consider the decay rate of oscillations in $u$. Since $\lambda_0(L)=0$, the slowest such oscillations decay is proportional to $e^{-\lambda_1 t}$. Thus, if we bound the decay rate of these oscillations, we implicitly bound on the spectral gap. To characterize the magnitude of oscillations, we introduce the modulus of continuity for a function defined on a graph.

For a function $f:V(G)\times\mathbb{R}^+ \longrightarrow \mathbb{R}$, we call $\eta:[-D,D]\times\mathbb{R}^+\longrightarrow \mathbb{R}$ its \textit{modulus of continuity} if
\begin{equation}\label{eqn:modulus}
    \eta(s,t) = \begin{cases}
    			\sup_{y,x\in V(G)} \left\{ f(y,t)-f(x,t) \; \vert \; \lvert y^{-1}x\rvert \leq s \right\} & s >0 \\
    			0 & s=0 \\
    			-\sup_{y,x\in V(G)} \left\{ f(y,t)-f(x,t) \; \vert \; \lvert y^{-1}x\rvert \leq -s \right\} & s <0 
    			\end{cases}
\end{equation}
Although traditionally we would define the modulus only over non-negative $s$, defining it as an anti-symmetric function about the origin is advantageous for the analysis that follows. Importantly, our choice of $\eta$ is monotonic and sub-additive, which further simplifies many of the arguments that follow. In future settings, however, it may be worth utilizing alternatively restricted moduli, such as concave moduli. Since we are interested in finite graphs, there always exists a concave function that both lies above and touches $\eta$. In fact the analysis that follows applies to these moduli equally well, but would require more detail than is necessary in the current context.

To prove \Cref{thm:bound}, we need the following fact.
\begin{proposition}\label{prop:BCs}
 Suppose $S \subseteq G $ is a finite strongly convex subgraph in an invariant homogeneous graph $G$ with edge generating set $\mathcal{K}$. Let $u: V(S) \longrightarrow \mathbb{R}$ have modulus of continuity $\eta$. Then, for $y,x,ay \in V(S)$ either $ax \in V(S)$ or $\left\lvert u(ay)-u(x) \right\rvert \leq \eta\left(\lvert y^{-1}x \rvert \right)$.
 \end{proposition}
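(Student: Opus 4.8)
The plan is to split on the position of $ay$ relative to $x$ and reduce each case either to the monotonicity of $\eta$ or to \Cref{prop:sp2}. The key first observation is that $ay$ is a neighbor of $y$ in $G$ (since $a\in\mathcal{K}$), so the triangle inequality for the word metric gives $\bigl\lvert d(ay,x)-d(y,x)\bigr\rvert\le 1$; in particular exactly one of the following holds, either $d(ay,x)\le d(y,x)$ or $d(ay,x)=d(y,x)+1$, and I would handle these two cases separately.

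In the case $d(ay,x)\le d(y,x)$ the bound follows directly. Since $ay,x\in V(S)$, the defining property of the modulus of continuity, applied to the pairs $(ay,x)$ and $(x,ay)$, gives $\lvert u(ay)-u(x)\rvert\le\eta\bigl(d(ay,x)\bigr)$; as $\eta$ is non-decreasing and $d(ay,x)\le d(y,x)=\lvert y^{-1}x\rvert$, monotonicity upgrades this to $\lvert u(ay)-u(x)\rvert\le\eta\bigl(\lvert y^{-1}x\rvert\bigr)$, which is the second alternative in the statement.

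In the remaining case $d(ay,x)=d(y,x)+1$ I would invoke \Cref{prop:sp2} with its two base vertices interchanged: taking the vertex called ``$x$'' there to be our $y$ and the vertex called ``$y$'' there to be our $x$, its hypotheses read $y,ay,x\in V(S)$ and $d(ay,x)=d(y,x)+1$ --- precisely what is available --- and its conclusion is $ax\in V(S)$, which is the first alternative, completing the dichotomy. The argument is short, and I do not expect a genuine obstacle; the only point needing care, and the one I would flag as the sole (minor) difficulty, is this bookkeeping in the appeal to \Cref{prop:sp2}: that proposition is phrased so as to move membership from the ``$x$''-side ($ax\in S$) to the ``$y$''-side ($ay\in S$), whereas here we are given $ay\in V(S)$ and must produce $ax\in V(S)$, so one must instantiate the universally quantified statement with the roles of its two vertices swapped rather than applying it verbatim.
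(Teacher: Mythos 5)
Your proof is correct and follows essentially the same route as the paper's: split on whether $d(ay,x)\le d(y,x)$ or $d(ay,x)=d(y,x)+1$, handle the first case with monotonicity of $\eta$, and handle the second by applying \Cref{prop:sp2} with the roles of the two base vertices swapped to conclude $ax\in V(S)$. Your write-up is actually cleaner than the paper's one-line argument (which, in invoking \Cref{prop:sp2}, writes the distance condition as $\lvert y^{-1}(ax)\rvert\le\lvert y^{-1}x\rvert$ rather than the intended $d(ay,x)\le d(y,x)$), and your explicit flag about the role-swap in \Cref{prop:sp2} is exactly the right point of care.
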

 \begin{proof}
 	To prove this, simply note that from \Cref{prop:sp2} we know that either $ax \in V(S)$ or $\lvert y^{-1}(ax) \rvert \leq \lvert y^{-1}x \rvert$. Thus, $ax \in V(S)$ or $u(ay)-u(x) \leq \eta(\lvert y^{-1} x \rvert)$.
 \end{proof}
 \begin{proposition}\label{prop:BCs2}
 Suppose $S \subseteq G $ is a finite strongly convex subgraph in an invariant homogeneous graph $G$ with edge generating set $\mathcal{K}$. Let $u: V(S) \longrightarrow \mathbb{R}$ have modulus of continuity $\eta$. Then, for $y,x \in V(S)$ achieving the supremum in $\eta(\lvert y^{-1}x\rvert)$ with $u(y)\geq u(x)$
	\begin{enumerate}
		\item \label{item:BCs2} if $ay \in V(S)$ and $ax \notin V(S)$, then $u(ay)-u(y) \leq 0$ and
		\item \label{item:BCs3} if $ax \in V(S)$ and $ay \notin V(S)$, then $u(ax)-u(x) \geq 0$.
	\end{enumerate}
\end{proposition}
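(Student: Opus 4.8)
The plan is to read both parts off from \Cref{prop:BCs}, using only the extremality of the pair $(y,x)$ together with the symmetry of the word metric; no heat-equation input and no further structure of $S$ is needed, since \Cref{prop:BCs} already packages the strong-convexity information (via \Cref{prop:sp2}). Fix $a \in \mathcal{K}$ and write $s = \lvert y^{-1}x\rvert$. The one fact I would isolate at the outset is that, since $(y,x)$ attains the supremum defining $\eta(s)$ and $u(y)\ge u(x)$, we have the equality $u(y)-u(x)=\eta(s)$; this is exactly what converts the modulus bounds produced by \Cref{prop:BCs} into comparisons of $u$-values.

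For part~\ref{item:BCs2}, I have $y,x,ay\in V(S)$, so \Cref{prop:BCs} applies verbatim and yields the dichotomy: either $ax\in V(S)$, or $\lvert u(ay)-u(x)\rvert\le\eta(\lvert y^{-1}x\rvert)=\eta(s)$. The hypothesis $ax\notin V(S)$ forces the second alternative, whence $u(ay)-u(x)\le\lvert u(ay)-u(x)\rvert\le\eta(s)=u(y)-u(x)$, which rearranges to $u(ay)-u(y)\le 0$.

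For part~\ref{item:BCs3}, I would instead feed \Cref{prop:BCs} the triple $(x,y,ax)$ in place of its $(y,x,ay)$; this is legitimate because $x,y,ax\in V(S)$ by hypothesis. The resulting dichotomy is: either $ay\in V(S)$, or $\lvert u(ax)-u(y)\rvert\le\eta(\lvert x^{-1}y\rvert)$. Since $\mathcal{K}=\mathcal{K}^{-1}$, the word metric is symmetric, so $\lvert x^{-1}y\rvert=s$; and $ay\notin V(S)$ by hypothesis, so the second alternative holds. Then $u(y)-u(ax)\le\lvert u(ax)-u(y)\rvert\le\eta(s)=u(y)-u(x)$, i.e.\ $u(ax)-u(x)\ge 0$.

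I do not expect a genuine obstacle: the geometric content is entirely inherited from \Cref{prop:sp2} through \Cref{prop:BCs}, and the rest is bookkeeping. The only point that warrants care is the relabelling used for part~\ref{item:BCs3}: one must verify that, after the swap, the ``$ay$'' appearing in \Cref{prop:BCs} is our $ax$ — which is precisely the hypothesis $ax\in V(S)$ — and that the bound it delivers is stated at distance $\lvert x^{-1}y\rvert$, which coincides with $s$ only because $\mathcal{K}$ is closed under inversion.
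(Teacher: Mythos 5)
Your proof is correct and follows essentially the same route as the paper: part~\ref{item:BCs2} is the paper's telescoping argument (use $u(y)-u(x)=\eta(s)$ at the extremal pair together with \Cref{prop:BCs}), and for part~\ref{item:BCs3}, whose proof the paper omits as ``similar,'' you correctly identify the intended relabeling $y\leftrightarrow x$ in \Cref{prop:BCs} and note that the symmetry of the word metric (from $\mathcal{K}=\mathcal{K}^{-1}$) is what makes the resulting bound land at the same argument $s$.
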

\begin{proof}
 To prove \cref{item:BCs2}, assume that $ax \notin V(S)$ and write
 \begin{align*}
   	u(ay)-u(y) &= u(ay)-u(y) + u(x) - u(x) \\
   	&= u(ay) - u(x) -\eta(\lvert y^{-1} x \rvert) \\
   	&\leq \eta(\lvert y^{-1} x \rvert) - \eta(\lvert y^{-1} x \rvert) \\
   	&= 0
   \end{align*}
   where the inequality follows from \Cref{prop:BCs}. \Cref{item:BCs3} is similar to \cref{item:BCs2} and proof is omitted. 
\end{proof}

\begin{proposition}\label{prop:reduce}
Suppose $S$ is a finite strongly convex subgraph in an invariant homogeneous graph $G$ with edge generating set $\mathcal{K}$. Let $u: V(S) \longrightarrow \mathbb{R}$ have modulus of continuity $\eta$.  Then, for $y,x \in V(S)$ achieving the supremum in $\eta(\lvert y^{-1}x\rvert)$ with $u(y)\geq u(x)$
\begin{equation*}
	-Lu(y)+Lu(x) \leq \sum_{a \in \mathcal{Y}} \left( u(ay) - u(y) \right) - \sum_{a \in \mathcal{X}}\left( u(ax) - u(x) \right)
\end{equation*}
for any $\mathcal{Y}\subseteq \mathcal{K}_y$ and $\mathcal{X}\subseteq \mathcal{K}_x$ satisfying $\mathcal{Y}\cap(\mathcal{K}_y\cap\mathcal{K}_x) = \mathcal{X}\cap(\mathcal{K}_y\cap\mathcal{K}_x)$.
\end{proposition}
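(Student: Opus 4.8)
The plan is to expand both Laplacians via \cref{eqn:LaplacianOp} and show that restricting the sums from the full generating sets $\mathcal{K}_y,\mathcal{K}_x$ to the prescribed subsets $\mathcal{Y},\mathcal{X}$ only discards nonpositive quantities. Since $-Lu(y)+Lu(x)=\sum_{a\in\mathcal{K}_y}(u(ay)-u(y))-\sum_{a\in\mathcal{K}_x}(u(ax)-u(x))$ and $\mathcal{Y}\subseteq\mathcal{K}_y$, $\mathcal{X}\subseteq\mathcal{K}_x$, subtracting the claimed right-hand side reduces the statement to
\begin{equation*}
\sum_{a\in\mathcal{K}_y\setminus\mathcal{Y}}(u(ay)-u(y))-\sum_{a\in\mathcal{K}_x\setminus\mathcal{X}}(u(ax)-u(x))\le 0,
\end{equation*}
so it suffices to assign a definite sign to each surviving summand.

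The organizing device is the set $I:=\mathcal{K}_y\cap\mathcal{K}_x$ of generators $a$ for which both $ay$ and $ax$ lie in $V(S)$, together with $J:=\mathcal{Y}\cap I=\mathcal{X}\cap I$, the common part singled out by the hypothesis. From $\mathcal{Y}\subseteq\mathcal{K}_y$ and $\mathcal{X}\subseteq\mathcal{K}_x$ one gets the disjoint decompositions $\mathcal{K}_y\setminus\mathcal{Y}=(I\setminus J)\sqcup((\mathcal{K}_y\setminus I)\setminus\mathcal{Y})$ and $\mathcal{K}_x\setminus\mathcal{X}=(I\setminus J)\sqcup((\mathcal{K}_x\setminus I)\setminus\mathcal{X})$, in which the two copies of $I\setminus J$ are precisely matched because $\mathcal{Y}\cap I=\mathcal{X}\cap I$. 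Hence the left-hand side above splits into three pieces: a sum over $a\in I\setminus J$ of $(u(ay)-u(y))-(u(ax)-u(x))$, a sum over $a\in(\mathcal{K}_y\setminus I)\setminus\mathcal{Y}$ of $u(ay)-u(y)$, and minus a sum over $a\in(\mathcal{K}_x\setminus I)\setminus\mathcal{X}$ of $u(ax)-u(x)$.

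Each piece is nonpositive by a result already available. For $a\in I\setminus J$ both $ay,ax\in V(S)$, and \Cref{prop:sp1} gives $|(ay)^{-1}(ax)|=|y^{-1}x|$; since $u$ has modulus of continuity $\eta$ and $(y,x)$ attains the supremum in $\eta(|y^{-1}x|)$ with $u(y)\ge u(x)$, we obtain $u(ay)-u(ax)\le\eta(|y^{-1}x|)=u(y)-u(x)$, so each term of the first piece is $\le 0$. For $a\in(\mathcal{K}_y\setminus I)\setminus\mathcal{Y}$ we have $ay\in V(S)$ and $ax\notin V(S)$, so the first part of \Cref{prop:BCs2} yields $u(ay)-u(y)\le 0$; symmetrically, for $a\in(\mathcal{K}_x\setminus I)\setminus\mathcal{X}$ we have $ax\in V(S)$ and $ay\notin V(S)$, so the second part of \Cref{prop:BCs2} yields $u(ax)-u(x)\ge 0$ and hence $-(u(ax)-u(x))\le 0$. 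Adding the three pieces proves the displayed inequality.

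I expect the only real difficulty to be the bookkeeping: tracking which generators keep $y$ (resp.\ $x$) inside $S$, and recognizing that the compatibility condition $\mathcal{Y}\cap(\mathcal{K}_y\cap\mathcal{K}_x)=\mathcal{X}\cap(\mathcal{K}_y\cap\mathcal{K}_x)$ is exactly what makes the ``interior'' contribution — the one carrying no a priori sign — cancel in pairs. Once the index sets are partitioned this way, the estimate is just \Cref{prop:sp1} and \Cref{prop:BCs2} applied termwise.
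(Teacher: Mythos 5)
Your proof is correct and takes essentially the same route as the paper: both arguments split the index set by $\mathcal{K}_y\cap\mathcal{K}_x$ versus its complement, apply \Cref{prop:sp1} and the modulus of continuity to the paired intersection terms, and apply \Cref{prop:BCs2} termwise to the one‑sided leftovers. You have merely reorganized the algebra — moving the discarded generators to one side and showing their total contribution is nonpositive, where the paper instead presents the same facts as a chain of inequalities \cref{eqn:one,eqn:two,eqn:three} showing that shrinking each summation only increases the value.
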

\begin{proof}
From \cref{eqn:LaplacianOp} we have,
	\begin{dgroup*}
		\begin{dmath*}
		-Lu(y)+Lu(x) = \sum_{a \in \mathcal{K}_{y}}\left(u(ay) - u(y)\right) - \sum_{a \in \mathcal{K}_{x}} \left(u(ax) - u(x)\right) 
		\end{dmath*}
		\begin{dmath}\label{eqn:zero}
		{} = \sum_{a \in \mathcal{K}_{y}\cap\mathcal{K}_x}\left(u(ay) - u(y)\right) - \sum_{a \in \mathcal{K}_{y}\cap\mathcal{K}_x} \left(u(ax) - u(x)\right) + \sum_{a \in \mathcal{K}_{y}\setminus\mathcal{K}_x}\left(u(ay) - u(y)\right) - \sum_{\mathcal{K}_x \setminus\mathcal{K}_y} \left(u(ax) - u(x)\right).
		\end{dmath}
	\end{dgroup*}

	Now, since $\mathcal{K}_{y}\setminus\mathcal{K}_x$ is the set of all $a \in \mathcal{K}$ such that $ay \in V(S)$ and $ax \notin V(S)$ and similarly for $\mathcal{K}_{x}\setminus\mathcal{K}_y$, from \Cref{prop:BCs2} we know that 
	\begin{dgroup*}
		\begin{dmath}\label{eqn:one}
			\sum_{a \in \mathcal{K}_{y}\setminus\mathcal{K}_x}\left(u(ay) - u(y)\right) \leq \sum_{a \in \mathcal{J}_y}\left(u(ay) - u(y)\right)
		\end{dmath}
		\begin{dmath}\label{eqn:two}
			-\sum_{a \in \mathcal{K}_{x}\setminus\mathcal{K}_y}\left(u(ax) - u(x)\right) \leq -\sum_{a \in \mathcal{J}_x}\left(u(ax) - u(x)\right)
		\end{dmath}
	\end{dgroup*}
	for any $\mathcal{J}_y\subseteq\mathcal{K}_{y}\setminus\mathcal{K}_x$ and $\mathcal{J}_x\subseteq\mathcal{K}_{x}\setminus\mathcal{K}_y$. Now, noting that $u(y)-u(x) = \eta(\lvert y^{-1}x \rvert)$, \Cref{prop:sp1} implies that $u(ay)-u(ax) \leq u(y)-u(x)$ for any $a\in \mathcal{K}_y\cap\mathcal{K}_x$. Thus,
	\begin{dmath}\label{eqn:three}
	\sum_{a \in \mathcal{K}_{y}\cap\mathcal{K}_x}\left(u(ay) - u(y)\right) - \sum_{a \in \mathcal{K}_{y}\cap\mathcal{K}_x} \left(u(ax) - u(x)\right) \leq \sum_{a \in \mathcal{J}}\left(u(ay) - u(y)\right) - \sum_{a \in \mathcal{J}} \left(u(ax) - u(x)\right)
	\end{dmath}
	for any $\mathcal{J}\subseteq\mathcal{K}_y\cap\mathcal{K}_x$. Combining \cref{eqn:zero,eqn:one,eqn:two,eqn:three} completes the proof.
\end{proof}

\begin{proposition}\label{prop:MOCheat}
 Suppose $S$ is a finite strongly convex subgraph with even (odd) diameter $D$ of an invariant homogeneous graph $G$ with edge generating set $\mathcal{K}$ and $|\mathcal{K}|=k$. If $u:V(S)\times \mathbb{R}^+ \longrightarrow \mathbb{R}$ is a solution of \cref{eqn:heat}, then the modulus of continuity $\eta$ of $u$ satisfies for positive even (odd) $s$,
 \begin{equation}
	 	\frac{d\eta(s,t)}{dt} \leq - L_P \eta(s,t)
 \end{equation} 
 where $L_P$ is the combinatorial Laplacian of the path graph $P$ with $V(P)=\{s \; \vert \; \ab s \in \llbracket -D, D \rrbracket \; \ab \text{and \ab $s$ even (odd)} \}$ and $E(P) = \ab \{\{s,s+2\} \; \vert \; \ab s\in \llbracket -D, D-2 \rrbracket \;\text{and $s$ even (odd)} \}$.
\end{proposition}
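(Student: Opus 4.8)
The plan is to fix $t$ and a value $s>0$ of the same parity as $D$, pick a pair $y,x\in V(S)$ realizing the supremum that defines $\eta(s,t)$ with $u(y,t)\ge u(x,t)$, and set $r:=|y^{-1}x|\le s$; by monotonicity of $\eta$ one has $\eta(r,t)=\eta(s,t)$. Since $u$ solves \eqref{eqn:heat}, $\tfrac{d}{dt}\bigl(u(y,t)-u(x,t)\bigr)=-Lu(y,t)+Lu(x,t)$, and because $\eta(s,\cdot)$ is a supremum of finitely many smooth functions (hence locally Lipschitz) the standard envelope argument gives $\tfrac{d\eta(s,t)}{dt}\le -Lu(y,t)+Lu(x,t)$ at a realizing pair for a.e.\ $t$, which is the sense in which the stated differential inequality is to be read. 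So the task reduces to the pointwise algebraic inequality $-Lu(y)+Lu(x)\le -L_P\eta(s,t)$, to be established at every realizing pair.

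First I would expand $-Lu(y)+Lu(x)=\sum_{a\in\mathcal{K}_y}\bigl(u(ay)-u(y)\bigr)-\sum_{a\in\mathcal{K}_x}\bigl(u(ax)-u(x)\bigr)$ via \eqref{eqn:LaplacianOp} and observe that every summand is nonpositive: for $a\in\mathcal{K}_y\cap\mathcal{K}_x$, \Cref{prop:sp1} gives $d(ay,ax)=r$, so $u(ay)-u(ax)\le\eta(r,t)=u(y)-u(x)$ and $\bigl(u(ay)-u(y)\bigr)-\bigl(u(ax)-u(x)\bigr)\le 0$; for $a\in\mathcal{K}_y\setminus\mathcal{K}_x$ and $a\in\mathcal{K}_x\setminus\mathcal{K}_y$, \Cref{prop:BCs2} (through \Cref{prop:BCs}) gives $u(ay)-u(y)\le 0$ and $u(ax)-u(x)\ge 0$ respectively. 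This recovers $\tfrac{d\eta}{dt}\le 0$; the actual content of the proposition is that there is enough further negativity to match $-L_P\eta$, and \Cref{prop:reduce} is the tool that lets one restrict the sum to whatever convenient subset of generators makes this bookkeeping cleanest.

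That extra negativity comes from the geodesic. By strong convexity every shortest path from $x$ to $y$ lies in $S$; fix one, $x=v_0,v_1,\dots,v_r=y$, and let $b,c\in\mathcal{K}$ satisfy $by=v_{r-1}$ and $cx=v_1$, so that $b\in\mathcal{K}_y$, $c\in\mathcal{K}_x$, and $d(by,x)=d(cx,y)=r-1$. Applying the modulus inequality to $by,bx,cx,cy$ — using that $d(bx,y),d(cy,x)\in\{r-1,r,r+1\}$, that \Cref{prop:sp2} keeps the relevant outward moves inside $S$ when $r<D$, and the monotonicity/subadditivity (and, where convenient, concavity) of $\eta$ to convert the parity-$(r\pm1)$ terms that the graph produces into the parity-$(r\pm2)$ terms that $L_P$ wants — one checks case by case that the combined contribution of $b$ and $c$ is at most $\eta(r-2,t)+\eta(r+2,t)-2\eta(r,t)$ (with $r+2$ capped at $D$), the exceptional coincidence $b=c$ (forced by \Cref{prop:sp1} only when $r=1$) being handled by hand. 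Since all remaining summands are $\le 0$, this gives $-Lu(y)+Lu(x)\le\eta(r-2,t)+\eta(r+2,t)-2\eta(r,t)$; monotonicity of $\eta$ together with $r\le s$ and $\eta(r,t)=\eta(s,t)$ then bounds the right-hand side by $\eta(s-2,t)+\eta(s+2,t)-2\eta(s,t)=-L_P\eta(s,t)$ in the interior and by $\eta(D-2,t)-\eta(D,t)=-L_P\eta(D,t)$ at the degree-one endpoints $s=\pm D$ (where no outward generator exists, so the $\eta(r+2)$ terms never appear). The odd-$D$ statement is verbatim the same with "even" replaced by "odd".

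The step I expect to be the main obstacle is the case analysis in the third paragraph: one must simultaneously keep track of where $bx$ and $cy$ sit relative to the far endpoint and bridge the $r\pm1$ versus $r\pm2$ parity mismatch, and this bookkeeping is entangled with the graph boundary through \Cref{prop:BCs,prop:BCs2,prop:reduce,prop:sp2}. A secondary technical point, easily absorbed, is that the supremum defining $\eta(s,t)$ may be realized at distance $r<s$ with $r$ of the "wrong" parity; since $\eta(\cdot,t)$ is then flat on $[r,s]$, the monotonicity step still closes the estimate.
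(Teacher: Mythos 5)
Your proposal follows the paper's proof essentially step for step: you pick a realizing pair, expand $-Lu(y)+Lu(x)$ via \cref{eqn:LaplacianOp}, observe that all terms are nonpositive (via \Cref{prop:sp1} for generators in $\mathcal K_y\cap\mathcal K_x$ and \Cref{prop:BCs2} for the rest), identify the two geodesic generators as supplying the needed negativity, and invoke \Cref{prop:reduce} to restrict to those two generators before bounding the result by $-L_P\eta$. Your $b,c$ are the paper's $a_0,a_1$, and your pairing $(by,cx),\ (bx,cy)$ is exactly the paper's regrouping $(u(a_0y)-u(a_1x))+(u(a_1y)-u(a_0x))-2(u(y)-u(x))$. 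The four-way case split you defer (by membership of $bx$ and $cy$ in $S$) is precisely the paper's Cases 1--4, and your reduction from realizing distance $r\le s$ to the argument $s$ via monotonicity is a sound patch for a point the paper glosses over. This is the same proof.

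One claim in the third paragraph is wrong, though it does not derail you because you also say to run the case analysis anyway: \Cref{prop:sp2} does \emph{not} guarantee $bx\in S$ or $cy\in S$ when $r<D$. Proposition \ref{prop:sp2} goes the other way (from an outward move of $x$ landing in $S$, it deduces that the corresponding move of $y$ lands in $S$); it gives no license to conclude that the ``away'' neighbours $bx,cy$ stay in $S$, and they need not (take $S$ a path of length $D\ge 2$ inside a longer path, with $x$ an endpoint and $y$ its neighbour, so $r=1<D$ yet $bx\notin S$). The paper's Cases 2, 3, 4 exist exactly to handle $a_0x\notin S$ and/or $a_1y\notin S$ at arbitrary $r$, and there the choice of $\mathcal Y,\mathcal X$ in \Cref{prop:reduce} drops the offending generator, with \Cref{prop:BCs2} then supplying the nonpositivity that replaces the missing term. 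Your sketch already points at \Cref{prop:BCs,prop:BCs2,prop:reduce} for this, so the fix is just to delete the \Cref{prop:sp2} parenthetical and carry out those cases; the appeal to concavity of $\eta$ is likewise unnecessary --- monotonicity alone converts $\eta(r\pm1)$ into $\eta(r\pm2)$ in the paper's Cases 2--4.
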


\begin{proof}
 Choose $y,x$ to achieve the supremum in \cref{eqn:modulus} with $u(y) \geq u(x)$. Say that $s=\lvert y^{-1} x \rvert \in \mathbb{E}$ where $\mathbb{E}$ is the appropriate choice of the set of all evens or all odds. Then, we have that
    \begin{equation}\label{eqn:12}
        \frac{d \eta(s,t)}{dt}\bigg\rvert_{t=t_0} = \left(\frac{d u(y,t)}{dt}-\frac{d u(x,t)}{dt}\right)\bigg\rvert_{t=t_0} = -Lu(y,t_0)+Lu(x,t_0)
    \end{equation}
    where, to avoid excessive notation, we have adopted the convention $u(y)=u(y,t_0)$. Now, fix $a_0,a_1 \in \mathcal{K}$ such that $a_0 y$ and $a_1 x$ lie along a shortest path connecting $y$ to $x$. 
By our choice, we know that $a_0 y \in S$ and $a_1 x \in S$. Now, we have a few cases and in each we will apply \Cref{prop:reduce} with various choices of $\mathcal{Y},\mathcal{X}$. (In the cases that follow, we adopt the convention that $\eta(D+2)=\eta(D+1)=\eta(D)$.) 
\paragraph{Case 1, $a_0x \in S$ and $a_1 y \in S$:} In this case, we choose $\mathcal{Y}=\mathcal{X}=\{a_0,a_1\}$. Hence, \Cref{prop:reduce} and \cref{eqn:12} yield
    \begin{align*}
        \frac{d \eta(s,t)}{dt} &\leq u(a_0y) + u(a_1y)-2u(y) - u(a_0x) -u(a_1x)+2u(x) \\
        &= \left(u(a_0y)-u(a_1x)\right) + \left(u(a_1y)-u(a_0x)\right) - 2 \left(u(y)-u(x)\right) \\
        &\leq \eta(s-2) + \eta(s+2) -2 \eta(s) \\
        &= - L_P\eta(s) 
    \end{align*}
    where the final inequality follows from \cref{eqn:modulus}.
\paragraph{Case 2, $a_0x \notin S$ and $a_1y \in S$:} In this case, we choose $\mathcal{Y}=\{a_0,a_1\}$ and $\mathcal{X}=\{a_1\}$. Hence, \Cref{prop:reduce} and \cref{eqn:12} yield
	\begin{align*}
        \frac{d \eta(s)}{dt} &\leq u(a_0y) + u(a_1y)-2u(y) - u(a_1x) + u(x) \\
        &= \left(u(a_0y)-u(a_1x)\right) + \left(u(a_1y)-u(x)\right) - 2 \left(u(y)-u(x)\right) \\
        &\leq \eta(s-2) + \eta(s+1) -2 \eta(s) \\
        &\leq \eta(s-2) + \eta(s+2) - 2 \eta(s)\\
        &= - L_P\eta(s)
    \end{align*}
    where the inequalities follow from \cref{eqn:modulus}.
\paragraph{Case 3, $a_0x \in S$ and $a_1y \notin S$:} This is similar to Case 2 and proof is omitted.
\paragraph{Case 4, $a_0x \notin S$ and $a_1y \notin S$:} In this case, we choose $\mathcal{Y} = \{a_0\}$ and $\mathcal{X}=\{a_1\}$. Hence, \Cref{prop:reduce} and \cref{eqn:12} yield
	\begin{align*}
        \frac{d \eta(s)}{dt} &\leq u(a_0y)-u(y) - u(a_1x) + u(x) \\
        &= \left(u(a_0y)-u(a_1x)\right) - \left(u(y)-u(x)\right) \\
        &\leq \eta(s-2) -\eta(s) \\
        &\leq \eta(s-2) + \eta(s+2) - 2 \eta(s)\\
        &= - L_P\eta(s) 
    \end{align*}
    where the inequalities follow from \cref{eqn:modulus}.
    
    Thus, in all cases, 
    \begin{equation*}
    \frac{d \eta(s)}{dt} \leq -L_P \eta(s)
    \end{equation*}
	provided $s\geq 0$.
\end{proof}

\graphbound*
\begin{proof}
    Let $\lambda_1 = \lambda_1(L)$. Suppose $u_1$ is a solution to \cref{eqn:heat} and $(u_1,\lambda_1)$ is the first eigenvector-eigenvalue pair of $L$. Let $\eta$ be the modulus of continuity for $u_1$. For simplicitly, we restrict our attention to $\eta(s)$ such that $s\in \mathbb{E}$ in accordance with \Cref{prop:MOCheat}. In other words, we treat $\eta$ as a vector with entries indexed by $s \in \mathbb{E}$. Then, \Cref{prop:MOCheat} yields
    \begin{equation*}
        \frac{d\eta(s)}{dt} \leq - L_P  \eta(s) \;\; \text{for $s \geq 0$}.
    \end{equation*}
    Noting that $\eta$ as defined in \Cref{prop:MOCheat} is an odd function, we immediately see that
    \begin{equation*}
        \frac{d\eta(s)}{dt} \geq - L_P  \eta(s) \;\; \text{for $s < 0$}
    \end{equation*}
    so that we have
    \begin{equation*}
        \eta^{\top}\frac{d\eta}{dt} \leq - \eta^{\top}L_P  \eta.
    \end{equation*}
    Then,
    \begin{align*}
    		\frac{1}{2}\frac{d \lvert \eta \rvert^2}{dt} &\leq -\eta^{\top} L_P \eta \\
    		&\leq - \mu \lvert \eta \rvert^2
    \end{align*}
    where $\mu= 2\left(1-\cos\left(\frac{\pi}{D+1}\right)\right)$ is the smallest non-trivial eigenvalue of $L_P$. Hence, we have that 
    \begin{equation*}
        \lvert \eta(s) \rvert \leq C e^{-\mu t}
    \end{equation*}
    for $s \in \llbracket -D,D \rrbracket$ and some constant $C$ chosen independently of $t$. Then, there exist $y,x \in V(S)$ such that,
    \begin{align*}
        \lvert u_1(y,0)-u_1(x,0)\rvert e^{-\lambda_1 t} & = \eta(s,t) \\
        &\leq C e^{-\mu t} \\
        \lvert u_1(y,0)-u_1(x,0) \rvert &\leq C e^{(\lambda_1-\mu)t}.
    \end{align*}
    Note that $u_1(y,0)-u_1(x,0)$ is nonzero, so that if $\lambda_1-\mu < 0$ we arrive at a contradiction by taking $t \rightarrow \infty$. Hence, $\lambda_1 \geq \mu$.
\end{proof}

We can alternatively prove \Cref{thm:bound} without using the heat equation:
\begin{proof}
	Let $\lambda_1 = \lambda_1(L)$. Suppose that $(u_1,\lambda_1)$ is the first eigenvector-eigenvalue pair of $L$. Let $u_1$ have modulus of continuity $\eta$, and let vertices $x$ and $y$ achieve the supremum defining $\eta(s)$ in \cref{eqn:modulus}. Then
	\begin{equation*}
	-L u_1(y) + L u_1(x) = \lambda_1 \eta(s).
	\end{equation*}
	As shown in the proof of \Cref{prop:MOCheat},
	\begin{equation*}
		-L u_1(y) + Lu_1(x) \leq -L_P \eta(s).
	\end{equation*}
	Hence,
	\begin{equation*}
	-\lambda_1 \eta(s) \leq -L_P \eta(s).
	\end{equation*}
	Now, since $\eta(s) > 0$ for all $s>0$, we have that for $s>0$,
	\begin{equation*}
		-\lambda_1 \eta^2(s) \leq -\eta(s)L_P\eta(s).
	\end{equation*}
	Recalling that $\eta$ is odd, this yields
	\begin{align*}
		-\lambda_1 \lvert \eta \rvert^2 &\leq - \eta^{\top} L_P \eta \\
		&\leq - \mu \lvert \eta \rvert^2
	\end{align*}
	where $\mu= 2\left(1-\cos\left(\frac{\pi}{D+1}\right)\right)$ is the smallest non-trivial eigenvalue of $L_P$. Thus, since $\lvert \eta \rvert^2$ is nonzero, $\lambda_1 \geq \mu$ and we have proven \Cref{thm:bound}.
\end{proof}

One should note that the two proofs of \Cref{thm:bound} are essentially the same, as the lower bound on the decay-rate of the heat equation can be deduced from the $\ell^2$-norm of the modulus. Regardless, while the first method can be adapted to any non-constant function $u_1$, the latter cannot. Also, the reader familiar with normalized Laplacians should note that as a consequence of \Cref{thm:bound}, we obtain a lower bound of $\frac{2}{k}\left(1-\cos\left(\frac{\pi}{D+1}\right)\right)$ on the spectral gap of the normalized Laplacian for convex subgraphs of homogeneous graphs, where $k$ is the degree of the graph. Thus, we can compare this result to those of \cite{Chung1994,Chung}.

\subsection{Example 1: Path graphs}
Consider any path graph and note that it is a convex subgraph of some homogeneous graph. Then, \Cref{thm:bound} implies that the first eigenvalue 
\begin{equation*}
\lambda_1(L) \geq 2\left(1-\cos\left(\frac{\pi}{D+1}\right)\right).
\end{equation*}
 This bound is tight, since the eigenvalues of the path graph are actually given by 
 \begin{equation*}
 \lambda_j(L) = 2 \left(1-\cos\left(\frac{j \pi}{D+1}\right)\right).
 \end{equation*}

\subsection{Example 2: Hypercube graphs}\label{sec:hypercube}
\hypercube*
\begin{proof}
For the hypercube, we can choose $\mathcal{K}$ such that it is both abelian and every element $a \in \mathcal{K}$ is self-inverse. We again consider a solution $u$ to \cref{eqn:12} with modulus of continuity $\eta$. Then, $\eta$ either satisfies $\eta(2)>\eta(1)$ or $\eta(2)=\eta(1)$. Let $y,x$ be the vertices that achieve the supremum in $\eta(2)$ with $u(y)\geq u(x)$.

\paragraph{Case 1, $\lvert y^{-1}x\rvert=2$:} Note that in this case we can write $y = b' b x$ for some $b,b' \in \mathcal{K}$ and that $y\neq x$ implies $b \neq b'$. Then, \Cref{eqn:12} with $s =2$ becomes
\begin{align*}
 \frac{d\eta(2)}{dt} &= \sum_{a \in \mathcal{K}} \left(u(ay)-u(y) \right) - \sum_{a\in \mathcal{K}} \left(u(ax)-u(x) \right) \\
 &\leq (u(by)-u(bx)) + (u(b'y)-u(b'x) -2\eta(2)) \\
 &= (u(b'y) - u(bx)) + (u(by) - u(b'x)) - 2 \eta(2) \\
 & = -2 \eta(2).
\end{align*}
Above, the first inequality follows from \Cref{prop:reduce} with $\mathcal{Y}=\mathcal{X}=\{b,b'\}$. 

\paragraph{Case 2, $\lvert y^{-1}x\rvert=1$:} \Cref{eqn:12} with $s=1$ becomes 
\begin{align*}
 \frac{d\eta(1)}{dt} &= \sum_a \left(u(ay)-u(y) \right) - \sum_a \left(u(ax)-u(x) \right) \\
 &\leq (u(by)-u(bx)) + (u(b'y)-u(b'x) -2\eta(1)) \\
 &= (u(b'y) - u(b'x)) + (u(x) - u(y)) - 2 \eta(1) \\
 & \leq -2 \eta(1)
\end{align*}
where the first inequality follows from \Cref{prop:reduce} with $\mathcal{X}=\mathcal{Y}=\{b,b'\}$ with $b$ satisfying $x=by$ and $bx=y$. The second inequality follows from the definition of $\eta$. Thus, in either case we have that
\begin{equation}\label{eqn:hypercube_bound}
\frac{d\eta(2)}{dt} \leq -2 \eta(2).
\end{equation}
Now, by either method of \Cref{thm:bound}, $\lambda_1(L) \geq 2$ and our bound is tight.
\end{proof}
It is both remarkable and (perhaps) expected that the particular connectivity of the hypercube allows us to consider only points separated by a path of length 2 while still obtaining a tight bound. The modulus of continuity approach suggests that in many cases of physical interest the spectral gap is a highly local property. This result may be exploitable in the context of quantum Ising models, where it can reduce our problem to that of estimating the log-concavity of the ground-state wavefunction (the lowest eigenvector). 

\section{Dirichlet Eigenvalues and Ising-type Hamiltonians}\label{sec:Dirichlet}
Now we consider the more general problem of bounding the gap of the matrix $H=L + W$, where $L$ is the combinatorial Laplacian for some subgraph $S$ of a homogeneous graph and $W$ is a positive-semidefinite matrix. In the physics literature these are known as ``stoquastic Hamiltonians" and have the same spectrum as the Dirichlet eigenvalues of $S$ for an appropriate choice of host graph. The key results of \Cref{sec:multidimensional} should be seen as \Cref{prop:ratio_rel} and \Cref{cor:hamiltonian}. 

The constant $C_{u_0}$ introduced in \Cref{thm:Dirichlet} and \Cref{thm:Dirichlet_Hypercube} requires further exploration before it provides useful bounds. However, we believe that in the case that $u_0$ is log-concave, for some suitably-defined notion of log-concavity, $C_{u_0}\geq 1$. \Cref{sec:log-concave} applies the techniques of \Cref{sec:multidimensional} to derive a bound on the spectral gap of $H$ in the one-dimensional case. \Cref{thm:LCbound} and \Cref{thm:LCbound2} should be viewed as a slightly weakened (but still strong) analogue of \Cref{thm:Dirichlet}, demonstrating the utility of the methods of \cref{sec:multidimensional} and the promise of an alternative expression for \Cref{thm:LCbound} and \Cref{thm:LCbound2} entirely in terms of (a measure of) the log-concavity of $u_0$ and the diameter of $S$.

\subsection{Induced subgraphs of weighted homogeneous graphs and Hamiltonians with potentials}\label{sec:multidimensional}
In this section, we consider an induced subgraph $S$ of a graph $G$ with vertex set $V(S) \subseteq V(G)$ and nonempty vertex boundary $\delta S$. We let $S' = \{\{x,y\}\in E(G) \; \vert \; x \in V(S) \; \text{or} \; y \in V(S)\}$. In other words, $S'$ is the set of all edges with at least one end in $S$. Then, we define the lowest (combinatorial) Dirichlet eigenvalue of the induced subgraph $S$ as
\begin{equation}\label{eqn:Dirichlet_ground}
	\lambda^{(D)}_0 = \inf_{u \in D^{*}}\frac{\displaystyle\sum_{\{x,y\} \in S'}\left(u(x)-u(y)\right)^2}{\displaystyle\sum_{y \in V(S)} u^2(y)}
\end{equation}
where $D^*$ is simply the set of all nonzero functions satisfying the Dirichlet condition
\begin{equation*}
	u(x) = 0 \; \text{for} \; x \in \delta S.
\end{equation*}
The function $u_0:V(S)\cup \delta S \rightarrow \mathbb{R}$ achieving the infimum in \cref{eqn:Dirichlet_ground} is called a Dirichlet eigenfunction and in accordance with the physics literature, we refer to $u_0$ as a ground-state. In the interior of $S$, $u_0$ is nonzero and has constant sign, so is taken to be completely positive. Hence, there exists a function $g:V(S)\cup\delta S \longrightarrow \mathbb{R}$ satisfying 
\begin{equation}\label{eqn:log-state}
	u_0(y) = 
	\begin{cases}
		e^{g(y)} & y \in V(S) \\
		0 & y \in \delta S.
	\end{cases}
\end{equation}
Above, $g$, the log of the ground-state, will prove a more natural consideration in much of what follows. In general, any function $u_0:V(S)\cup \delta S \rightarrow \mathbb{R}$ such that $u_0 > 0$ interior to $S$ is compatible with such a choice of $g$. To specify a function consistent with \cref{eqn:log-state} for some $u_0 >0$, we will often write $g=\log(u_0)$. 

Higher Dirichlet eigenvalues can be defined generally by
\begin{equation*}
	\lambda^{(D)}_i = \inf_{\substack{u \perp C_i \\ u \in D^{*}}}\frac{\displaystyle\sum_{\{x,y\} \in S'}\left(u(x)-u(y)\right)^2}{\displaystyle\sum_{y \in V(S)} u^2(y)}
\end{equation*}
where $C_i$ is the subspace spanned by the $i$ lowest nonzero Dirichlet eigenfunctions.\footnote{Note that $\lambda^{(D)}_i$ differ from those of the corresponding normalized Laplacian only by a factor of $k$, the degree of $G$.} Imposing the Dirichlet condition explicitly, we can write
\begin{equation}\label{eqn:Dirichlet_ev}
	\lambda^{(D)}_{i} = \inf_{u \perp C_i}\frac{\displaystyle\sum_{x \sim y \in S}\left(u(x)-u(y)\right)^2 + \sum_{y \in V(S)} W(y) u^2(y)}{\displaystyle\sum_{y \in V(S)} u^2(y)}
\end{equation}
where $W(y)=\lvert \{\{x,y\}\in S' \; \vert \; x \in \delta S \}\rvert$. Thus, we identify $\lambda^{(D)}_i$ with the eigenvalues of the matrix $L+W$ where $L$ is the combinatorial Laplacian of $S$ and $W$ is some diagonal matrix with non-negative integer-valued entries. 

For the remainder of this section, we adopt a somewhat more general construction. We let $H = L + W$ where $W$ is any positive-semidefinite diagonal matrix. Equivalently, $W:V(G) \longrightarrow \ab \mathbb{R}_{\geq 0}$. (Since we are ultimately concerned with spectral gaps, we could equivalently discuss $W$ as any diagonal matrix by simply shifting $W\mapsto c I + W$ for any $c$ without impacting the spectral gap.) Despite relaxing the combinatorial constraints on $W$, the eigenvalues of $H$ are still given by \cref{eqn:Dirichlet_ev}.\footnote{These are also the Dirichlet eigenvalues for the weighted combinatorial Laplacian with $\displaystyle W(u) = \sum_{\{v,u\} \in \partial S} w(v,u)$ and unit weight on the edges internal to $S$.} $H$ defined this way corresponds to a subset of so-called ``stoquastic Hamiltonians'' which have been of recent interest in quantum theory \cite{Bravyi2006a}.\footnote{One important stoquastic Hamiltonian would be the transverse-field Ising model with a non-negative field.} Since solutions of \cref{eqn:Dirichlet_ev} are simply the eigenvalues of $H$, for the remainder of this section we write $\lambda_{i} = \lambda^{(D)}_i$.

To bound the spectral gap $\gamma(H)$, we once again wish to consider solutions to the heat equation
\begin{equation}\label{eqn:heat3}
    \begin{cases}
    	\frac{d\phi(s,t)}{dt} = - H \phi(s,t) \\
    	\phi(s,0) = \phi_0(s).
    \end{cases}
\end{equation}
In general, we proceed by consider the componentwise ratio of two solutions $u_0$ and $u_1$ to \cref{eqn:heat3}, where we choose $u_0 > 0$ in the interior of $S$. This situation is rather similar to that considered in \Cref{sec:homogeneous1}, but we require a relationship like \cref{eqn:LaplacianOp} to proceed. To this end, we propose the following:

\begin{proposition}\label{prop:ratio_rel}
 Let $S$ with combinatorial Laplacian $L$ be a convex induced subgraph of some invariant homogeneous graph. Let $u_0(x,t),u_1(x,t)$ be solutions to \cref{eqn:heat3} with $u_0(x,0)=u_0(x)$ and $u_1(x,0)=u_1(x)$ and satisfying the Dirichlet condition on $\delta S$. Suppose $u_0(x) > 0$ for all $x \in V(S)$. If
	\begin{equation*}
		f(x,t) = \frac{u_1(x,t)}{u_0(x,t)} \; \text{for $x \in V(S)$}. 
	\end{equation*} 
 and 
	\begin{equation*}
		\Delta_a f(x,t) = \begin{cases}
			f(ax,t)-f(x,t) & ax \in V(s) \\
			0 & ax \in \delta S.
		\end{cases}
	\end{equation*} 
Then,
 \begin{equation*}
  \frac{d f}{dt} = \sum_{a \in \mathcal{K}} \Delta_a f(x,t) e^{g(ax)-g(x)}
 \end{equation*}
 for $g = log(u)$ defined consistently with \cref{eqn:log-state}.
\end{proposition}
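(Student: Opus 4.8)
The plan is to prove \Cref{prop:ratio_rel} by a direct differentiation of the quotient $f = u_1/u_0$, using the heat equation \cref{eqn:heat3} to trade the time derivative for spatial differences. Writing $u_1 = f u_0$ and applying the quotient rule gives, at each $x \in V(S)$ and each time $t$,
\begin{equation*}
\frac{df}{dt} = \frac{\dot u_1}{u_0} - f\,\frac{\dot u_0}{u_0}.
\end{equation*}
Substituting $\dot u_i = -H u_i = -(L+W)u_i$ for $i=0,1$, the diagonal potential contributes $-W(x)u_1(x)/u_0(x) + f(x)W(x)u_0(x)/u_0(x)$, which vanishes; so $W$ drops out entirely and one is left with $\frac{df}{dt} = -\,(Lu_1)(x)/u_0(x) + f(x)\,(Lu_0)(x)/u_0(x)$.

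Next I would expand both Laplacians using \cref{eqn:LaplacianOp} in the Dirichlet-extended form $Lu(x) + W(x)u(x) = \sum_{a \in \mathcal{K}}\bigl(u(x) - u(ax)\bigr)$ with the convention $u(ax) = 0$ for $ax \in \delta S$, so that after the cancellation above
\begin{equation*}
\frac{df}{dt} = \frac{1}{u_0(x)}\sum_{a \in \mathcal{K}}\Bigl[\bigl(u_1(ax) - u_1(x)\bigr) - f(x)\bigl(u_0(ax) - u_0(x)\bigr)\Bigr].
\end{equation*}
I then substitute $u_1(ax) = f(ax)u_0(ax)$ when $ax \in V(S)$ and $u_1(ax) = u_0(ax) = 0$ when $ax \in \delta S$, and regroup each summand. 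For an interior neighbour the bracket simplifies to $f(ax)u_0(ax) - f(x)u_0(ax) = \bigl(f(ax)-f(x)\bigr)u_0(ax)$, which divided by $u_0(x)$ is exactly $\Delta_a f(x)\,e^{g(ax)-g(x)}$ by \cref{eqn:log-state}; for a boundary neighbour the bracket is $-f(x)u_0(x) + f(x)u_0(x) = 0$, matching $\Delta_a f(x) = 0$ there. Summing over $a \in \mathcal{K}$ yields the claimed identity.

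There is no deep obstacle: the proposition is essentially a bookkeeping computation. The only points requiring care are (i) recording that the potential $W$ cancels between the two terms of the quotient rule, so the resulting formula depends on $W$ only through $u_0$ (equivalently through $g$), and (ii) handling the Dirichlet boundary consistently, using $u_1(ax)=u_0(ax)=0$ on $\delta S$ so that those summands vanish — which is precisely why the convention $\Delta_a f(x)=0$ for $ax \in \delta S$ in the statement is the correct one, and why $e^{g(ax)-g(x)}$ need never be evaluated on $\delta S$. One should also note that $g$, and hence $e^{g(ax)-g(x)} = u_0(ax,t)/u_0(x,t)$, carries a $t$-dependence, so the identity is read pointwise in $t$; since every step is performed at a fixed time this causes no difficulty.
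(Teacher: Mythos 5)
Your proof is correct and follows essentially the same route as the paper's: quotient rule, cancellation of the diagonal potential $W$, expansion of $L$ via \cref{eqn:LaplacianOp}, and substitution of $u_1 = f u_0$. Your bookkeeping is actually slightly cleaner than the paper's — by keeping the prefactor $1/u_0(x)$ rather than factoring out $f(x) = u_1(x)/u_0(x)$, you avoid the division by $u_1(x)$ that forces the paper to dispose of the case $u_1(x) = 0$ separately before proceeding; and your explicit remark that $e^{g(ax)-g(x)} = u_0(ax,t)/u_0(x,t)$ is read pointwise in $t$ usefully clarifies a notational looseness in the statement.
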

\begin{proof}
For simplicity, we write $f=f(t)$ and similarly for $u_0(t),u_1(t)$. Then,
\begin{dgroup*}
	\begin{dmath*}
		\frac{d f(x)}{dt} = \frac{1}{u_0(x)} \frac{d u_1(x)}{dt} - \frac{u_1(x)}{u^2_0(x)} \frac{d u_0(x)}{dt}.
	\end{dmath*}
\end{dgroup*}
	If $u_1(x)=0$, the second term is $0$ and the remainder of the proof becomes trivial. Hence, we assume that $u_1(x)\neq 0$. Now, we recall that $H=L+W$ where $W$ is diagonal and apply \cref{eqn:heat3} to get
\begin{dgroup*}
	\begin{dmath*}
		\frac{d f(x)}{dt} = f(x) \left(\frac{1}{u_1(x)}\frac{d u_1(x)}{dt} - \frac{1}{u_0(x)} \frac{d u_0(x)}{dt} \right)
	\end{dmath*}
	\begin{dmath*}
		= f(x) \left(-\frac{1}{u_1(x)}H u_1(x) + \frac{1}{u_0(x)}H u_0(x)\right)
	\end{dmath*}
	\begin{dmath*}
		= f(x) \left(-\frac{1}{u_1(x)}L u_1(x) + \frac{1}{u_0(x)}L u_0(x)  -\frac{1}{u_1(x)} W u_1(x) + \frac{1}{u_0(x)}W u_0(x)\right)
	\end{dmath*}
	\begin{dmath*}
		= f(x) \left(-\frac{1}{u_1(x)}L u_1(x) + \frac{1}{u_0(x)}L u_0(x)  -W(x) + W(x) \right)
	\end{dmath*}
	\begin{dmath*}
		= f(x) \left(\frac{1}{u_1(x)}\sum_{a \in \mathcal{K}}\left(u_1(ax)-u_1(x)\right) - \frac{1}{u_0(x)}\sum_{a \in \mathcal{K}}\left(u_0(ax)-u_0(x)\right)\right)
	\end{dmath*}
	\begin{dmath*}
		= f(x) \left(\sum_{a \in \mathcal{K}}\left(\frac{u_1(ax)}{u_1(x)}-1\right) - \sum_{a \in \mathcal{K}}\left(\frac{u_0(ax)}{u_0(x)}-1\right)\right)
	\end{dmath*}
	\begin{dmath*}
		= f(x)\sum_{a \in \mathcal{K}}\left(\frac{u_1(ax)}{u_1(x)} - \frac{u_0(ax)}{u_0(x)}\right)
	\end{dmath*}
	\begin{dmath*}
		= f(x)\sum_{\substack{a \in \mathcal{K} \\ ax \in V(S)}}\left(\frac{f(ax)}{f(x)}-1\right)\frac{u_0(ax)}{u_0(x)}
	\end{dmath*}
	\begin{dmath*}
		= \sum_{\substack{a \in \mathcal{K} \\ ax \in V(S)}}\left(f(ax)-f(x)\right)\frac{u_0(ax)}{u_0(x)}
	\end{dmath*}
	\begin{dmath*}
		= \sum_{a \in \mathcal{K}} \Delta_a f(x) \frac{u_0(ax)}{u_0(x)}
	\end{dmath*}
	\begin{dmath*}
		= \sum_{a \in \mathcal{K}} \Delta_a f(x) e^{g(ax) - g(x)}.
	\end{dmath*}
\end{dgroup*}
\end{proof}

\begin{corollary}\label{cor:hamiltonian}
 Let $(u_0, \lambda_0),(u_1,\lambda_0+\gamma)$ be the first and second eigenvector-eigenvalue pair of $H=L+W$ where $L$ is a combinatorial Laplacian and $W$ is a diagonal positive-semidefinite matrix. Then, if $u_0(t),u_1(t)$ are solutions to \cref{eqn:heat3} with $u_0(0)=u_0$ and $u_1(0)=u_1$, we have that
 \begin{equation*}
  -\gamma f(x) = \sum_{a\in \mathcal{K}} \Delta_a f(x) e^{g(ax,t)-g(x,t)}.
 \end{equation*}
 for $g$ defined consistently with \cref{eqn:log-state}.
\end{corollary}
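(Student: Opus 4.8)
The plan is to specialize \Cref{prop:ratio_rel} to the case in which the initial data $u_0,u_1$ are honest eigenvectors of $H$ rather than arbitrary solutions of the heat equation. First I would observe that, because $(u_0,\lambda_0)$ and $(u_1,\lambda_0+\gamma)$ are eigenvector-eigenvalue pairs of $H$, the functions
\[
u_0(x,t) = e^{-\lambda_0 t}\,u_0(x), \qquad u_1(x,t) = e^{-(\lambda_0+\gamma)t}\,u_1(x)
\]
solve the initial value problem \eqref{eqn:heat3} with the prescribed initial data; this is the same observation already used for $L$ in \Cref{sec:homogeneous1}, and it is immediate since $H$ is a finite symmetric matrix. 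Moreover these solutions inherit the Dirichlet condition on $\delta S$ from $u_0,u_1$, and $u_0(x,t)=e^{-\lambda_0 t}u_0(x)>0$ for every $x\in V(S)$ because $u_0$ is the (positive) ground state. Hence the hypotheses of \Cref{prop:ratio_rel} are met, and we may apply it with $g(x,t)=\log u_0(x,t)$.

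Next I would simply substitute. The componentwise ratio is
\[
f(x,t)=\frac{u_1(x,t)}{u_0(x,t)}=e^{-\gamma t}\,\frac{u_1(x)}{u_0(x)}=e^{-\gamma t} f(x),
\]
so that $\frac{d}{dt}f(x,t)=-\gamma e^{-\gamma t} f(x)$ and $\Delta_a f(x,t)=e^{-\gamma t}\Delta_a f(x)$. On the other side, $g(x,t)=\log u_0(x)-\lambda_0 t$, so the additive constant $-\lambda_0 t$ cancels in the difference and $g(ax,t)-g(x,t)$ is in fact independent of $t$. Plugging these into the conclusion of \Cref{prop:ratio_rel} gives
\[
-\gamma e^{-\gamma t} f(x) = \sum_{a\in\mathcal{K}} e^{-\gamma t}\,\Delta_a f(x)\, e^{g(ax,t)-g(x,t)},
\]
and dividing through by the common nonzero factor $e^{-\gamma t}$ yields the claimed identity $-\gamma f(x)=\sum_{a\in\mathcal{K}}\Delta_a f(x)\,e^{g(ax,t)-g(x,t)}$.

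There is no serious obstacle here: the statement is a direct corollary of \Cref{prop:ratio_rel} once one recognizes the right solutions of the heat equation. The only points demanding a little care are verifying that the eigenvector-driven solutions genuinely satisfy the hypotheses of \Cref{prop:ratio_rel} (positivity of $u_0(\cdot,t)$ and the Dirichlet boundary condition), and bookkeeping the time dependence — namely noticing that the exponential prefactors coming from the two eigenvalues drop out of both $\Delta_a f$ and the difference $g(ax,t)-g(x,t)$, so that the resulting identity is genuinely time-independent.
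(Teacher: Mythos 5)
Your proof is correct and takes essentially the same route as the paper: the paper's one-line argument is precisely the observation that $f(x,t)=\frac{u_1(x)}{u_0(x)}e^{-\gamma t}$, which you then spell out by substituting into \Cref{prop:ratio_rel} and cancelling the common factor $e^{-\gamma t}$. The extra bookkeeping you do (verifying positivity, the Dirichlet condition, and the time-independence of $g(ax,t)-g(x,t)$) is just a more explicit version of what the paper leaves implicit.
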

\begin{proof}
 This follows from \Cref{prop:ratio_rel} by simply noting that $f(x,t)=\frac{u_1(x)}{u_0(x)}e^{-\gamma t}$.
\end{proof}

%


Note that the operator acting on $f$ and satisfying the relationships of \Cref{prop:ratio_rel} and \Cref{cor:hamiltonian} has a constant eigenfunction with eigenvalue $0$. Thus, the analysis of \Cref{sec:homogeneous1} carries over identically, provided that we can appropriately bound $-\gamma f(x)$. Because of this, \Cref{prop:ratio_rel} and \Cref{cor:hamiltonian} are sufficient to prove \Cref{thm:Dirichlet}.

\Dirichlet*
\begin{proof}
First, let $\eta$ be the modulus of $f$. Then, note that by \Cref{prop:reduce}, for all $(y,x) \in \xi$ 
\begin{equation*}
	\sum_{a\in\mathcal{K}}\Delta_a f(y)-\sum_{a\in\mathcal{K}}\Delta_a f(x) < 0
\end{equation*}
for appropriate choice of $\mathcal{Y},\mathcal{X}$. Additionally, by the method of \Cref{prop:MOCheat}
\begin{equation*}
	\sum_{a\in\mathcal{K}}\Delta_a f(y)-\sum_{a\in\mathcal{K}}\Delta_a f(x) \leq -L_P \eta(\lvert y^{-1} x \rvert )
\end{equation*}
with $L_P$ defined as in \Cref{thm:bound}. 

Further, \Cref{cor:hamiltonian} requires that
\begin{equation*}
\sum_{a\in\mathcal{K}}\Delta_a f(y)e^{g(ay)-g(y)}-\sum_{a\in\mathcal{K}}\Delta_a f(x) e^{g(ax)-g(x)} < 0.
\end{equation*}
Thus, 
\begin{align*}
	C_{u_0} &= \inf_{\{y,x\}\in \xi}\frac{\displaystyle \sum_{a\in\mathcal{K}}\Delta_a f(y)e^{g(ay)-g(y)} -\sum_{a\in\mathcal{K}}\Delta_a f(x) e^{g(ax)-g(x)}}{\displaystyle\sum_{a\in\mathcal{K}}\Delta_a f(y)-\sum_{a\in\mathcal{K}}\Delta_a f(x)} >0
\end{align*}
Now, we apply \Cref{prop:ratio_rel} and obtain
\begin{align*}
    \frac{d \eta(s)}{dt} &= \sum_{a\in\mathcal{K}}\Delta_a f(y)e^{g(ay)-g(y)}-\sum_{a\in\mathcal{K}}\Delta_a f(x)e^{g(ax)-g(x)} \\
    &\leq C_{u_0} \left(\sum_{a\in\mathcal{K}}\Delta_a f(y)-\sum_{a\in\mathcal{K}}\Delta_a f(x)\right) \\
    &\leq -C_{u_0} L_P \eta(\lvert y^{-1}x \rvert).
\end{align*}
Hence, by the exact same argument as \Cref{thm:bound}, we have that $\gamma \geq C_{u_0} \mu$. Thus,
\begin{equation*}
		\gamma \geq 2 C_{u_0} \left(1-\cos\left(\frac{\pi}{D+1} \right)\right).
\end{equation*}
\end{proof} 

\DirichletH*

Proof of \Cref{thm:Dirichlet_Hypercube} is omitted, since it exactly follows the approach to \Cref{thm:Dirichlet}.

\subsection{Example 3: Log-concave ground states}\label{sec:log-concave}
In this section we apply the techniques above to prove a gap bound in the case that $H=L+W$ has a log-concave ground state $u_0$ for $L$ corresponding to a one-dimensional graph $S$. In particular, by log-concavity we mean that $g:V(S)\cup \delta S \longrightarrow \mathbb{R}$ defined consistently with \cref{eqn:log-state} satisfies
\begin{equation}\label{eqn:concave}
	\sum_{a \in \mathcal{K}} \left(g(ay) - g(y)\right) \leq 0 \; \text{for all $y \in V(S)$}.
\end{equation}
In more general settings, this is not a satisfactory notion of concavity, since the analogue of a saddle-point might also satisfy this definition. However, for the one-dimensional case considered in this section, it is appropriate. In the future, we will likely define a much stronger notion of concavity that acts as a better analogue to the continuous definition while still being useful in our setting. Regardless, note that concavity as defined by \cref{eqn:concave} can be trivially satisfied by $g$ at any vertex connected to the boundary $\delta S$. To see this, simply note our freedom in $g$ in \cref{eqn:log-state} and choose $g(ay) \rightarrow -\infty$ for any $ay \in \delta S$.

In the case of the path graph $S$, we choose our edge generating set $\mathcal{K}=\left\{b,b^{-1}\right\}$ and log-concavity implies that $g(bx)-2g(x)+g(b^{-1}x) \leq 0$ for all $x \in V(S)$. 

We also introduce in this section a modulus of concavity for $g$. This modulus allows us to prove tighter bounds than the simple assumption of log-concavity itself. For a graph $S$ with diameter $D$, we call $\omega:[0,D]\longrightarrow \mathbb{R}$ the modulus of concavity of a function $g$ defined on $V(S)$ if

\begin{equation}\label{eqn:modulus_contraction}
    \omega(s) = \inf_{\lvert y^{-1}x\rvert = s} \left\{ \frac{\Delta_{a^{-1}} g(y) + \Delta_{a}g(x)}{2} \; \bigg\vert \; \lvert y^{-1}a^{2}x\rvert \leq \lvert y^{-1}x\rvert\right\}.
\end{equation}

Basically, the modulus of concavity tells us exactly how strongly concave $g$ is over a particular path separation $s$. Its utility lies in the expectation that as the ground-state becomes more contracted, the spectral gap should increase.

\begin{proposition}\label{prop:pathop}
	Suppose $S$ is a path graph of diameter $D$ and $f:V(S)\times \mathbb{R} \longrightarrow \mathbb{R}$ and $\gamma$ are defined as in \Cref{cor:hamiltonian}. Let $\eta$ be the modulus of continuity of $f$. Then, for $s \geq 1$, $\eta$ satisfies
	 \begin{dgroup*}
	 	\begin{dmath*}
	 	-\gamma \eta(s) \leq -2L_P\eta(s) - 4 (\cosh(\omega(s))-1) \nabla \eta(s) 
	 	\end{dmath*}
	 \end{dgroup*}
	 where $\omega$ is the modulus of concavity of the ground state of $S$, $L_P$ is the combinatorial Laplacian operator for the path graph $P$ with $V(P)=\llbracket -D,D\rrbracket$ and $E(P)=\{\{s,s+1\}\}_{s\in\llbracket -D,D-1\rrbracket}$, and $\nabla$ is the operator defined by
	 \begin{equation*}
	 	\nabla \eta(s) = \eta(s)-\eta(s-1).
	 \end{equation*}
\end{proposition}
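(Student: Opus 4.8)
The plan is to rerun the arguments behind \Cref{prop:MOCheat} and \Cref{thm:Dirichlet} on a path graph, now feeding in the modulus of concavity of $g=\log u_0$. Fix $s\ge 1$ and choose $y,x\in V(S)$ realizing the supremum defining $\eta(s)$ with $f(y)\ge f(x)$, so $f(y)-f(x)=\eta(s)$; since $\mathcal{K}=\{b,b^{-1}\}$ we may relabel so that $b$ steps from $x$ toward $y$. Applying \Cref{cor:hamiltonian} at $y$ and at $x$ and subtracting gives
\begin{equation*}
	-\gamma\eta(s)=\bigl(\alpha_+A+\alpha_-B\bigr)-\bigl(\beta_+C+\beta_-D\bigr),
\end{equation*}
where $A=\Delta_b f(y)$, $B=\Delta_{b^{-1}}f(y)$, $C=\Delta_b f(x)$, $D=\Delta_{b^{-1}}f(x)$ and $\alpha_\pm=e^{g(b^{\pm1}y)-g(y)}$, $\beta_\pm=e^{g(b^{\pm1}x)-g(x)}$, a term vanishing when the relevant neighbor lies in $\delta S$ (cf.\ \Cref{prop:ratio_rel}). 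If one or both of the outward neighbors $by$, $b^{-1}x$ lies in $\delta S$ one reduces to fewer terms exactly as in the case split of \Cref{prop:MOCheat}, so I would sketch only the interior case.

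Next I would record the elementary inputs. Log-concavity \cref{eqn:concave} says that $(g_p)$ is a concave sequence along the path, so its forward differences are non-increasing; exponentiating gives $\alpha_+\alpha_-\le 1$, $\beta_+\beta_-\le 1$, $\alpha_+\le\beta_+$ and $\beta_-\le\alpha_-$. The pair $(y,x)$ together with $a=b$ meets the side condition $|y^{-1}b^2x|=|s-2|\le s$ in \cref{eqn:modulus_contraction}, so $\Delta_{b^{-1}}g(y)+\Delta_b g(x)\ge 2\omega(s)$, i.e.\ $\alpha_-\beta_+\ge e^{2\omega(s)}$. Finally, extremality of $(y,x)$ — comparing $f(b^{\pm1}y)$ and $f(b^{\pm1}x)$ with $f(y),f(x)$, using $f(y)-f(x)=\eta(s)$ and the convention $\eta(s')=\eta(D)$ for $s'>D$ from \Cref{prop:MOCheat} — yields the increment bounds
\begin{equation*}
	A\le\nabla\eta(s+1),\qquad B\le-\nabla\eta(s),\qquad C\ge\nabla\eta(s),\qquad D\ge-\nabla\eta(s+1),
\end{equation*}
together with the discrete concavity $A\le C$ and $B\le D$ (from $f(b^{\pm1}y)-f(b^{\pm1}x)\le\eta(s)$).

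From these I would derive two competing estimates for $-\gamma\eta(s)$: substituting the increment bounds directly gives $-\gamma\eta(s)\le(\alpha_++\beta_-)\nabla\eta(s+1)-(\alpha_-+\beta_+)\nabla\eta(s)$, while using the discrete concavity together with the slope orderings $\alpha_+\le\beta_+$, $\beta_-\le\alpha_-$ (so $\alpha_+A-\beta_+C\le(\alpha_+-\beta_+)C\le(\alpha_+-\beta_+)\nabla\eta(s)$ and $\alpha_-B-\beta_-D\le(\alpha_--\beta_-)B\le-(\alpha_--\beta_-)\nabla\eta(s)$) gives $-\gamma\eta(s)\le\bigl[(\alpha_++\beta_-)-(\alpha_-+\beta_+)\bigr]\nabla\eta(s)$. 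Now AM--GM on $\alpha_-\beta_+\ge e^{2\omega(s)}$ gives $\alpha_-+\beta_+\ge 2e^{\omega(s)}$, and combining $\alpha_+\le 1/\alpha_-$, $\beta_-\le 1/\beta_+$ with the same constraint and a one-variable minimization gives $(\alpha_-+\beta_+)-(\alpha_++\beta_-)\ge 4\sinh\omega(s)$. Taking the first estimate when $\alpha_++\beta_-\le 2$, and otherwise the convex combination of the two estimates carrying weight $2/(\alpha_++\beta_-)$ on the first (which pins the $\nabla\eta(s+1)$-coefficient at $2$ and leaves the $\nabla\eta(s)$-coefficient at most $-4\sinh\omega(s)-2$), together with the elementary inequalities $2e^{\omega}\ge 4\cosh\omega-2$ and $4\sinh\omega+2\ge 4\cosh\omega-2$ (each equivalent to $\omega\ge 0$, assumed throughout this section), collapses everything to $-\gamma\eta(s)\le 2\nabla\eta(s+1)-\bigl(4\cosh\omega(s)-2\bigr)\nabla\eta(s)$, which is exactly $-2L_P\eta(s)-4\bigl(\cosh\omega(s)-1\bigr)\nabla\eta(s)$.

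The main obstacle is this final combination. The direct substitution alone leaves the coefficient $\alpha_++\beta_-$ on $\nabla\eta(s+1)$, which cannot be bounded by $2$ uniformly (near the boundary one of $\alpha_+,\beta_-$ can be large), so the ``discrete-concavity'' estimate is genuinely needed and one must interpolate between the two bounds; it is this interpolation, paired with the two-sided control $4\sinh\omega(s)\le(\alpha_-+\beta_+)-(\alpha_++\beta_-)$ and $\alpha_-+\beta_+\ge 2e^{\omega(s)}$, that produces $\cosh$ rather than $e^{\omega}$ or $\sinh\omega$. A secondary point is that the comparison path $P$ here carries nearest-neighbor edges, unlike the $\pm2$ path in \Cref{prop:MOCheat}, because on a one-dimensional $S$ both generators are summed over in \Cref{cor:hamiltonian}; the endpoints $s=D$ and $s=1$ (where $\alpha_-\beta_+\ge e^{2\omega(1)}$ forces $\omega(1)=0$) are handled by the same $\eta$-extension conventions used there.
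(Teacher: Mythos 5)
Your proof is correct, but it reaches the target inequality by a genuinely different mechanism than the paper's. Both arguments set up the same four increment bounds $A\le\nabla\eta(s+1)$, $B\le-\nabla\eta(s)$, $C\ge\nabla\eta(s)$, $D\ge-\nabla\eta(s+1)$ and use log-concavity and the modulus of concavity. The paper, however, uses only the \emph{single-point} log-concavity at $y$ and at $x$ (giving $\alpha_+\alpha_-\le1$, $\beta_+\beta_-\le1$) and then groups the inner terms as $\Psi_i\le-\nabla\eta(s)(\alpha_-+\beta_+)$ and the outer terms as $\Psi_o\le\nabla\eta(s+1)(\alpha_++\beta_-)$, factoring both through the identity $\alpha_-+\beta_+=2\cosh(p)\,e^{\omega'}$, $\alpha_++\beta_-\le2\cosh(p)\,e^{-\omega'}$ with $p=\tfrac12\bigl(\Delta_{b^{-1}}g(y)-\Delta_b g(x)\bigr)$ and $\omega'\ge\omega(s)$. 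The common factor $2\cosh(p)$ is then discarded via the observation that multiplying the already-negative right-hand side by $\cosh(p)\ge1$ only helps, which directly gives $-\gamma\eta(s)\le-2L_P\eta(s)-4(\cosh\omega(s)-1)\nabla\eta(s)$ with no case split. You instead keep the coefficients $\alpha_\pm,\beta_\pm$ untouched, bring in the additional cross-comparisons $\alpha_+\le\beta_+$ and $\beta_-\le\alpha_-$ (two-point concavity along the path, not used in the paper), extract a second estimate $-\gamma\eta(s)\le\bigl[(\alpha_++\beta_-)-(\alpha_-+\beta_+)\bigr]\nabla\eta(s)$ from $A\le C$, $B\le D$, and then interpolate between the two estimates to force the $\nabla\eta(s+1)$-coefficient down to $2$. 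Both routes are valid; the paper's factorization through $\cosh(p)$ is cleaner and also yields the stronger intermediate inequality with the $2\cosh(p)$ prefactor intact (which in principle could be exploited further), whereas your route is more elementary but requires the extra two-point concavity inputs, a one-variable minimization to get $4\sinh\omega(s)$, and a weighted convex combination calibrated precisely so that $2e^\omega\ge 4\cosh\omega-2$ and $4\sinh\omega+2\ge 4\cosh\omega-2$ both land on the same target. The identity $-2L_P\eta(s)=2\nabla\eta(s+1)-2\nabla\eta(s)$, which you use implicitly, closes the gap between your final form and the stated bound.
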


\begin{proof}
	 First, we let $f$ and $g$ be defined as in \Cref{prop:ratio_rel} with $f$ having modulus of continuity $\eta$. For simplicity, let $f(\cdot)=f(\cdot,t)$. Then, for $y,x$ achieving $\eta(s)$ with $f(y,t) > f(x,t)$, we have that
	 \begin{equation*}
	 	-\gamma \left(f(y)-f(x)\right) = \sum_{a\in \mathcal{K}} \Delta_a f(y) e^{g(ay)-g(y)} -\sum_{a\in \mathcal{K}} \Delta_a f(x) e^{g(ax)-g(x)}.
	 \end{equation*}
	 
	 Suppose that $b^{-1}y$ and $bx$ lie along a shortest path connecting $y$ to $x$. We begin by considering the interior terms
	 \begin{equation*}
	 \Psi_i \equiv \Delta_{b^{-1}} f(y) e^{g(b^{-1}y)-g(y)} - \Delta_{b} f(x)e^{g(bx)-g(x)}.
	 \end{equation*}
	 In particular,
	 \begin{align*}
	 	\Delta_{b^{-1}}f(y) &= f(b^{-1}y)-f(y) \\
	 	&= f(b^{-1}y)-f(y) + f(x) - f(x) \\
	 	&= f(b^{-1}y)-f(x)-\eta(s) \\
	 	&\leq \eta(s-1)-\eta(s).
	 \end{align*}
	 In similar fashion, we also have that $-\Delta_b f(x) \leq \left(\eta(s-1)-\eta(s)\right)$. Hence,
	 \begin{align*}
	 	\Psi_i &\leq \left(\eta(s-1)-\eta(s)\right)\left(e^{\Delta_b^{-1}g(y)} + e^{\Delta_b g(x)} \right) \\
	 	&= \left(\eta(s-1)-\eta(s)\right)\exp\left(\frac{\Delta_{b^{-1}}g(y)+\Delta_b g(x)}{2}\right)\left(e^{p}+e^{-p}\right)
	 \end{align*}
	 where $p= \frac{\Delta_{b^{-1}}g(y)-\Delta_b g(x)}{2}$. Then,
	 \begin{align*}
	 	\Psi_i &\leq 2 \cosh(p)\left(\eta(s-1)-\eta(s)\right)\exp\left(\frac{\Delta_{b^{-1}}g(y)+\Delta_b g(x)}{2}\right) \\
	 	&\leq 2 \cosh(p) \left(\eta(s-1)-\eta(s)\right) e^{\omega(s)}.
	 \end{align*}
	 where the final inequality comes from the definition of $\omega$ and the fact that $\eta(s-1)\leq \eta(s)$. The outer terms follow a similar procedure, where $\Delta_b f(y) \leq \eta(s+1)-\eta(s)$ and $-\Delta_{b^{-1}}f(x) \leq \eta(s+1)-\eta(s)$. For these, we have that
	 \begin{align*}
	 	 \Psi_o &\equiv \Delta_{b} f(y)e^{\Delta_b g(y)} - \Delta_{b^{-1}} f(x)e^{\Delta_{b^{-1}}g(x)} \\
	 	 &\leq \left(\eta(s+1)-\eta(s)\right)\left(e^{\Delta_b g(y)} + e^{\Delta_{b^{-1}}g(x)}\right) \\
	 	 &\leq \left(\eta(s+1)-\eta(s)\right)\left(e^{-\Delta_{b^{-1}} g(y)} + e^{-\Delta_{b}g(x)}\right) \\
	 	 &= 2 \cosh(p)\left(\eta(s+1)-\eta(s)\right)\exp\left(\frac{-\Delta_{b^{-1}} g(y)-\Delta_{b}g(x)}{2}\right) \\
	 	 &\leq 2 \cosh(p)\left(\eta(s+1)-\eta(s)\right)e^{-\omega(s)}.
	 \end{align*}
	 Above, the second inequality follows from log-concavity and the final inequality follows from the definition of $\omega$.
	 
	 Combining $\Psi_i$ and $\Psi_o$ we have that,
	 \begin{align*}
	 	-\gamma \left(f(y)-f(x)\right) &= \Psi_i + \Psi_o \\
	 	&\leq 2\cosh(p) \left( \left(\eta(s-1)-\eta(s)\right) e^{\omega(s)} + 2 \left(\eta(s+1)-\eta(s)\right)e^{-\omega(s)} \right)\\
	 	&\leq 2\cosh(p)\left(- L_P \eta(s) + R \right)
	 \end{align*} 
	 where
	 \begin{align*}
	 	R &\equiv \left(\eta(s-1)-\eta(s)\right) (e^{\omega(s)}-1) + \left(\eta(s+1)-\eta(s)\right)(e^{-\omega(s)}-1).
	 \end{align*}
	 Above, because log-concavity requires that $\omega(s)\geq 0$ and $\eta$ is monotonic, both terms in $R$ are independently non-positive. Thus, 
	 \begin{align*}
	 	R &= \left(\eta(s-1)-\eta(s)\right) (e^{\omega(s)}-1) + \left(\eta(s+1)-\eta(s)\right)(e^{-\omega(s)}-1) \\
	 	&= 2 \left(\eta(s-1)-\eta(s)\right)(\cosh(\omega(s))-1) + \left(\eta(s+1)-\eta(s-1)\right)(e^{-\omega(s)}-1) \\
	 	&\leq 2 \left(\eta(s-1)-\eta(s)\right)(\cosh(\omega(s))-1)
	 \end{align*}
	 and we arrive at 
	 \begin{dgroup*}
	 	\begin{dmath*}
	 	-\gamma \eta(s) \leq 2\cosh(p) \left(-L_P \eta(s) - 2 (\cosh(\omega(s))-1)\nabla\eta(s)\right).
	 	\end{dmath*}
	 \end{dgroup*}
	 Since the above inequality is trivially satisfied (and hence the proposition proven) if $-L_P \eta(s) - 2 (\cosh(\omega(s))-1)\nabla\eta(s) \geq 0$, we note that $\cosh(p)\geq 1$ and then 
	 \begin{dgroup*}
	 	\begin{dmath*}
	 	-\gamma \eta(s) \leq -2L_P \eta(s) - 4 (\cosh(\omega(s))-1)\nabla\eta(s).
	 	\end{dmath*}
	 \end{dgroup*}
\end{proof}	

We now use \Cref{prop:pathop} to perform various estimates on the spectral gap $\gamma(H)$. For our first estimate:
\LC*
\begin{proof}
	We begin with \Cref{prop:pathop},
	\begin{align*}
	 	-\gamma \eta(s) &\leq -2L_P\eta(s) - 4 \cosh(\omega(s))-1))\nabla\eta(s)	\\
	 	&= -2L_P\eta(s) - 4 (\cosh(\omega(s))-1)\left(\eta(s)-\eta(s-1)\right) \\
	 	{} &\leq -2L_P\eta(s) - 4 (\cosh(\omega(s))-1)\left(2\eta(s)-\eta(s+1)-\eta(s-1)\right)\\
	 	{} &= -2L_P\eta(s) - 4 (\cosh(\omega(s))-1)L_P \eta(s)\\
	 	{} &= -2L_P\eta(s)(2\cosh(\omega(s))-1))
	 \end{align*}
	 where $L_P$ is defined as in \Cref{prop:pathop} and the only inequality comes from adding a multiple of the non-negative term $\eta(s+1)-\eta(s)$. Hence, by the same analysis as \Cref{thm:bound},
	 \begin{equation*}
		 \gamma(H) \geq 4 \inf_s \left(2\cosh(\omega(s))-1\right)\left(1-\cos\left(\frac{\pi}{2D+1}\right)\right).
	\end{equation*}
\end{proof}

Although this proof follows immediately from \Cref{prop:pathop}, taking $\omega \rightarrow 0$ and comparing to \Cref{thm:bound} reveals that it is not tight. For one, the methods of \Cref{prop:pathop} are loose when $\omega(s)\sim 0$. This case is, of course, better handled by an approximation using the techniques of \Cref{sec:homogeneous1}. Nonetheless, we can still improve upon the estimate of \Cref{thm:LCbound} in the case that the gradient of $\omega$ is bounded. 
\LCHarder*
\begin{proof}
	We once again begin with the result of \Cref{prop:pathop}
	\begin{dgroup*}
	 	\begin{dmath*}
	 	-\gamma \eta(s) \leq -2L_P \eta(s) - 4 (\cosh(\omega(s))-1) \nabla \eta(s),
	 	\end{dmath*}
	 \end{dgroup*}
	 and look to estimate the contribution of the term associated with the operator $\nabla$. To do so, we consider the expected value of the associated term under $\eta$. Now, let $\omega(D+1)=0$ and then, if $\nabla'\eta(s)=\cosh(\omega(s))-1)\nabla\eta(s)$,
	 \begin{align*}
	 	\eta^{\top} \nabla' \eta &= \sum_{s=1}^{D} \eta(s)\left(\eta(s)-\eta(s-1)\right)(\cosh(\omega(s))-1) \\
	 	&\geq \sum_{s=1}^{D} \frac{\eta(s)+\eta(s-1)}{2}\left(\eta(s)-\eta(s-1)\right)(\cosh(\omega(s))-1) \\
	 	&=\frac{1}{2}\sum_{s=1}^{D}\left(\eta^2(s)-\eta^2(s-1)\right)(\cosh(\omega(s))-1)
	\end{align*}
	where the inequality follows from the monotonicity of $\eta$. Then,
	\begin{align*}	 	
	 	2 \eta^{\top} \nabla' \eta &=\sum_{s=1}^{D}\eta^2(s)(\cosh(\omega(s))-1) - \sum_{s=1}^{D}\eta^2(s-1)(\cosh(\omega(s))-1) \\
	 	&= \sum_{s=1}^{D}\eta^2(s)(\cosh(\omega(s))-1) - \sum_{s=1}^{D-1}\eta^2(s)(\cosh(\omega(s+1))-1) \\
	 	&= \sum_{s=1}^{D-1}\eta^2(s)(\cosh(\omega(s))-\cosh(\omega(s+1))) + \eta^2(D)\left(\cosh(\omega(D))-1)\right) \\
	 	&= \sum_{s=1}^{D}\eta^2(s)\Delta^- \cosh(\omega(s)) \\
	 	&\geq \inf_s\left(\Delta^- \cosh(\omega(s))\right)\sum_{s=1}^{D}\eta^2(s).
	 \end{align*}
	  Hence,
	\begin{equation*}
		\frac{2 \eta^{\top} \nabla' \eta}{\lvert \eta \rvert^2} \geq \inf_s \Delta^- \cosh \left( \omega(s)\right).
	\end{equation*}	 
	 
	 Thus, our estimate from \Cref{thm:LCbound} can be improved to
		\begin{align*}
			\gamma(H) &\geq 4\left(1-\cos\left(\frac{\pi}{2 D + 1}\right)\right) + 2\inf_s\left(\Delta^- \cosh(\omega(s))\right).
		\end{align*}
\end{proof}	

\section{Discussion and Future Work}
In general, modulus of continuity methods seem readily adaptable to both spectral graph theory and quantum theory. In particular, the results of \Cref{sec:homogeneous1} demonstrate that these estimates are quite strong for at least a certain class of graphs. The results of \Cref{sec:Dirichlet} are not immediately applicable in physical contexts, however \Cref{sec:log-concave} demonstrates ways in which they might be applied. These results can be strengthened by learning more about the relationship between the ratio $u_1/u_0$ and $u_0$ itself. Additionally, although a weak restriction, log-concavity may be an overly strong characterization of $u_0$ for practical purposes and one may prefer to derive results entirely in terms of the modulus of concavity of $\log(u_0)$. Further, bounds on the modulus of concavity of $\log(u_0)$ should be reducible to bounds on the modulus of concavity of the potential term $W'$ as seen in \cite{Andrews2011}. This comparison theorem is saved for future work, but since the potential term $W'$ is typically provided in both physical and quantum-computational contexts, in common settings this modulus of concavity should be explicitly calculable.

To advance these methods, we need to reduce the higher-dimensional cases of \Cref{sec:multidimensional} to the one-dimensional case of \Cref{sec:log-concave}. The results of \citep{Andrews2011} suggest that this is indeed possible, however proof in the graph-theoretic setting remains elusive. Such a theorem will likely follow from a stronger definition of concavity, so that we can make more direct comparisons of the weights $e^{\Delta_a g(y)}$. Although this looks promising, appropriately controlling the inequalities in each term of the sums of \Cref{prop:ratio_rel} and \Cref{thm:Dirichlet} appears difficult. With additional effort and perhaps a more appropriate choice of discrete modulus, it seems very likely that the tools presented in this paper place bounds on higher dimensional cases well within reach.
	
\section{Acknowledgements}
We thank Brad Lackey for useful discussions. This work was supported in part by the Joint Center for Quantum Information and Computer Science (QuICS), a collaboration between the University of Maryland Institute for Advanced Computer Studies (UMIACS) and the NIST Information Technology Laboratory (ITL). Portions of this paper are a contribution of NIST, an agency of the US government, and are not subject to US copyright.

\bibliographystyle{elsarticle-num-names}
\biboptions{sort&compress}

\end{document}